\newtheorem{proposition}{Proposition}
\newtheorem{defn}{Definition}
\newtheorem{lemma}{Lemma}
\newtheorem{cor}{Corollary}
\theoremstyle{remark}
\newcommand{\CC}{\mathbb C}
\newcommand{\Ql}{\bar{\mathbb Q}_\ell}
\newcommand{\Dbc}{D^b_c}
\newcommand{\FF}{\mathcal F}
\newcommand{\GGG}{\mathcal G}
\newcommand{\HH}{\mathcal H}
\newcommand{\LL}{\mathcal L}
\newcommand{\R}{\mathrm R}
\newcommand{\Gal}{\mathrm{Gal}}
\newcommand{\AAA}{\mathbb A}
\newcommand{\GG}{\mathbb G}
\newcommand{\Tr}{\mathrm{Tr}}
\newcommand{\HHH}{\mathrm H}
\newcommand{\Spec}{\mathrm{Spec}}
\newcommand{\GL}{\mathrm{GL}}
\newcommand{\CCC}{\mathcal C}
\title{Tensor and convolution direct image of $\ell$-adic sheaves}
\author{Antonio Rojas-Le\'on}
\address{Dpto. de \'Algebra, Fac. de Matem\'aticas \\
  Universidad de Sevilla \\
  c/Tarfia, s/n \\
  41013 Sevilla, Spain \\
  \tt{arojas@us.es}}
\begin{document}

\maketitle

\renewcommand{\thefootnote}{}
\footnote{Mathematics Subject Classification: 14F20, 11L07, 11T24}
\footnote{Partially supported by MTM2016-75027-P (Ministerio de Econom\'{\i}a y Competitividad) and FEDER}

\begin{abstract}
Given a Galois étale map of varieties $\pi:Y\to X$ and an $\ell$-adic sheaf or derived category object $P\in\Dbc(Y,\Ql)$, we study two cohomological operations: the tensor direct image and the convolution direct image (in the case of perverse sheaves), which give objects of $\Dbc(X,\Ql)$, and can be used to improve the estimates on some partial exponential sums and the number of rational points on certain varieties.
\end{abstract}

\section{Introduction}

Let $k$ be a finite field of cardinality $q=p^n$, and $X$ a commutative group variety defined over $k$. Fix a prime $\ell\neq p$, and let $k_r$ be the degree $r$ extension of $k$ in a fixed algebraic closure $\bar k$. Given an $\ell$-adic constructible sheaf $\FF$ on $X$ (or, more generally, an object $P$ in the derived category $\Dbc(X,\Ql)$), one can consider ``trace and norm Frobenius sums'' of the following form, for $a\in X(k)$:
$$
S(P,a)=\sum_{y\in X(k_r);\mathrm{N}_{X(k_r)/X(k)}(y)=x}\mathrm{N}(F_y|P_{\bar y})
$$
where $F_y$ is a geometric Frobenius element of $X$ at $y$ and $P_{\bar y}$ is the stalk of $P$ at a geometric point over $y$. These sums where studied in \cite{rl-rationality}, where we developed a cohomological construction (the \emph{convolution Adams operation}) that, given an object $P$, produces another object whose (regular) Frobenius traces are precisely the sums $S(P,a)$. This was used, for example, to give sharp estimates for the number of rational points on certain Artin-Schreier and superelliptic curves.

Now let $Y=X\times_{\Spec\;k}\Spec\;k_r$ be the extension of scalars, and suppose that $P$ is only defined on $Y$ (but not on $X$). Then the sums $S(P,a)$ still make sense, so one could ask whether they are given by the Frobenius traces of some sheaf or derived category object on $X$. The main goal of this article is to construct such an object, which will be called the \emph{convolution direct image} of $P$.

Given a finite index subgroup $H$ of a group $G$ and a representation $\rho$ of $H$ on some finitely generated free module, the \emph{tensor induction} of $\rho$ is a representation of $G$ derived from it. Roughly speaking, it is constructed in a similar way to the induced representation, but replacing the direct sum with the tensor product. See \cite[Section 13]{curtis-reiner} for details. Using the fact that the category of $\ell$-adic lisse sheaves on $X$ (respectively on $Y$) is equivalent to the category of continuous $\ell$-adic representations of its fundamental group $\pi_1(X,\bar x)$ with base point a given geometric point $\bar x$ of $X$ (resp. representations of $\pi_1(Y,\bar y)$) and, for a finite \'etale map $\pi:Y\to X$ such that $\pi(\bar y)=\bar x$ the group $\pi_1(Y,\bar y)$ is a finite index subgroup of $\pi_1(X,\bar x)$, this was used by N. Katz in \cite[10.3-6]{katz1990esa} to define the tensor direct image of a lisse sheaf on $Y$. In \cite[Proposition 2.1]{fu-wan-incomplete}, L. Fu and D. Wan showed that, if $Y=X\times\Spec\;k_r$ and $a\in X(k)$, the trace of a geometric Frobenius element $F$ at $a$ acting on the stalk at $a$ of the tensor direct image of a lisse sheaf $\FF$ is precisely the trace of $F^r$ acting on the stalk of $\FF$ at $a$. This was used to give estimates for some partial exponential sums. In this article we start by generalizing this construction to arbitrary constructible sheaves and derived category objects, so the number of Frobenius trace sums to which this result can be applied is greatly increased.

 In section 2 we start by recalling the definition and main properties of the tensor induction of representations, and we give an alternative characterization when $H$ is a normal subgroup of $G$. This characterization can be generalized to the category of constructible sheaves, and this is what we do in section 3. Given a finite Galois \'etale cover $\pi:Y\to X$, we define the tensor direct image $\pi_{\otimes\ast}\FF$ of a constructible \'etale sheaf $\FF$ on $Y$ with respect to $\pi$, and show that it generalizes the construction given in \cite{katz1990esa} for locally constant objects. We also show that, in the case $Y=X\times\Spec\;k_r$, its Frobenius traces are given by the same formula that was proved in \cite{fu-wan-incomplete} for lisse sheaves (Proposition \ref{frobenius}):
 $$
 \Tr(F_x|(\pi_{\otimes\ast}\FF)_{\bar x})=\Tr(F_x^r|\FF_{\bar x_r})
 $$

In section 4 we extend this construction to the derived category of \'etale sheaves on $Y$, and show that essentially the same properties hold for these objects. In section 5 we work on  commutative group schemes, and work out the same construction but replacing the tensor product operation with the convolution in the category of perverse sheaves. This gives rise to a new operation: the \emph{convolution direct image} of an perverse sheaf $P\in\mathcal Perv _Y$, which is a perverse sheaf on $X$. The main result is Proposition \ref{traceformula}: in the case $Y=X\times\Spec\;k_r$, the Frobenius traces of $\pi_{cv\ast}P$ on a point $x\in X(k)$ are given by
$$
 \Tr(F_x|\pi_{cv\ast}P_{\bar x})=\sum_{y\in X(k_r)|\Tr(y)=x}\Tr(F_y|P_{\bar y}).
$$

Finally, in the last section we breafly describe how this result can be applied to estimate exponential sums defined by trace constraints and the number of rational points on some types of curves, and we will compare these estimates with the ones obtained in \cite{rl-rationality} using convolution Adams power.

We would like to thank an anonymous referee for pointing out some errors in a previous version of this article.

\section{Tensor induction of representations}

Let us start by recalling the tensor induction operation for group representations \cite[Section 13]{curtis-reiner}. Let $G$ be a group, $H\subseteq G$ a subgroup of finite index, $A$ a ring and $\rho:H\to\GL(M)$ a representation of $H$ on the automorphism group of a finitely generated free $A$-module $M$. Fix a set $Hg_1,\ldots,Hg_d$ of representative for the right cosets of $H$ in $G$. Given an element $g\in G$, let $g_ig=h_ig_{\tau(i)}$ with $h_i\in H$. Then $\tau$ is a permutation of the set $\{1,\ldots,d\}$. The {\bf tensor induction} of $\rho$ maps $g$ to the element of $\GL(\otimes^{d}_AM)$ given by
$$
m_1\otimes\cdots\otimes m_d\mapsto \rho(h_1)m_{\tau(1)}\otimes\cdots\otimes\rho(h_d)m_{\tau(d)}.
$$
It is a representation $\otimes-\mathrm{Ind}(\rho)$ of $G$ whose isomorphism class is independent of the choice of $g_1,\ldots,g_d$. If $G$ is profinite and $\rho$ is continuous, so is $\otimes-\mathrm{Ind}(\rho)$.

Let $X$ be a connected scheme and $\pi:Y\to X$ a finite \'etale map of degree $d$. In \cite[10.5]{katz1990esa}, the tensor induction of representations is used to construct the tensor direct image of a lisse $\ell$-adic sheaf $\FF$ on $Y$: choosing geometric points $\bar y$ of $Y$ and $\bar x=\pi(\bar y)$ of $X$, and viewing $\FF$ as a finite dimensional $\ell$-adic representation of $\pi_1(Y,\bar y)$, which is an open subgroup of $\pi_1(X,\bar x)$ of index $d$, the \emph{tensor direct image} of $\FF$ is the lisse sheaf $\pi_{\otimes\ast}\FF$ on $X$ corresponding to the representation $\otimes-\mathrm{Ind}(\FF)$ of $\pi_1(X,x)$. It is well defined up to isomorphism.

A particularly important case is when $X$ is a variety over a finite field $k$ of characteristic $p\neq\ell$, $k_r$ is a finite extension of $k$ of degree $r$, and $Y=X\times\mathrm{Spec}(k_r)$ is the extension of scalars of $X$ to $k_r$. In that case, if $\FF$ is a lisse sheaf on $Y$, $t\in X(k)$ is a $k$-valued point of $X$ and $F_x\in\pi_1(X)$ denotes a geometric Frobenius element of $X$ at $t$, we have \cite[Proposition 2.1]{fu-wan-incomplete}
$$
\mathrm{Tr}(F_x|(\pi_{\otimes\ast}\FF)_{\bar x})=\mathrm{Tr}(F_x^r|\FF_{\bar x}).
$$
This is exploited in \cite{fu-wan-incomplete} to give estimates for some partial character sums.

Suppose that $H$ is normal in $G$. We will give a characterization of the tensor induction of a representation $\rho:H\to\GL(M)$ of $H$ that will be useful later to motivate the definition of the tensor direct image of a constructible sheaf. For every $g\in G$, let $g^\ast\rho:H\to\GL(M)$ be the representation given by
$$
(g^\ast\rho)(h)(m)=\rho(ghg^{-1})(m).
$$
It is clear that $(gg')^\ast\rho=g'^\ast g^\ast\rho$.

If $\phi:M\to M'$ is a homomorphism between the $H$-representations $\rho:H\to\GL(M)$ and $\rho':H\to\GL(M')$, then it is also a homomorphism between $g^\ast\rho$ and $g^\ast\rho'$, that we will denote by $g^\ast\phi$. So $\rho\mapsto g^\ast\rho$ defines an auto-equivalence of the category of representations of $H$ (over finitely generated free $A$-modules). If $g\in H$, then $\rho(g)$ is a isomorphism between $\rho$ and $g^\ast\rho$. More generally, if $g'g^{-1}\in H$, then $\rho(g'g^{-1})$ is an isomorphism bewteen $g^\ast\rho$ and $g'^\ast\rho$.

For every $\sigma\in G/H$, pick a representative $\tilde\sigma\in \sigma$, and let $h_{\sigma,\tau}=\tilde\sigma\tilde\tau\widetilde{\sigma\tau}^{-1}\in H$ for every $\sigma,\tau\in G/H$. We define the following category $\mathcal C_{G,H}$: the objects of $\mathcal C_{G,H}$ are representations $\rho:H\to\GL(M)$ together with isomorphisms $\Psi_\sigma:\rho\to \tilde\sigma^\ast\rho$ for each  $\sigma\in G/H$ satisfying the following cocycle condition for every $\sigma,\tau\in G/H$:
\begin{equation}\label{cocycle-group}
(\tilde\tau^\ast\Psi_\sigma)\circ \Psi_\tau=\rho(h_{\sigma,\tau})\circ\Psi_{\sigma\tau}:\rho\stackrel{\cong}{\to}\tilde\tau^\ast\tilde\sigma^\ast\rho
\end{equation}
A morphism in $\CCC_{G,H}$ is a homomorphism of representations $\Phi:\rho\to\rho'$ such that 
$$\Psi'_\sigma\circ\Phi=(\tilde\sigma^\ast\Phi)\circ\Psi_\sigma:\rho\to\tilde\sigma^\ast\rho'
$$
for every $\sigma\in G/H$.

Given a representation $\pi:G\to\GL(M)$, the restriction $\rho=\pi_{|H}$ of $\pi$ to $H$ together with the isomorphisms $\Psi_\sigma:\rho\to \tilde \sigma^\ast\rho$ given by $\Psi_\sigma(m)=\pi(\tilde\sigma)(m)$ is an object of $\mathcal C_{G,H}$, and every homomorphism of representations of $G$ $\pi\to\pi'$ induces a morphism in $\CCC_{G,H}$ between their restrictions. This defines a (covariant) functor $F$ from the category $\mathrm{Rep}_A(G)$ of representations of $G$ to ${\mathcal C}_{G,H}$.

\begin{proposition}\label{eqcat}
The functor $F$ is an equivalence of categories.
\end{proposition}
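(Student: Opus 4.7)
The plan is to show $F$ is essentially surjective and fully faithful by constructing an explicit quasi-inverse from the cocycle data. Since $H$ is normal and $\{\tilde\sigma\}_{\sigma\in G/H}$ is a complete set of coset representatives, every $g\in G$ has a unique expression $g=h\tilde\sigma$ with $h\in H$ and $\sigma\in G/H$; this is the combinatorial skeleton that lets one reconstruct a $G$-representation from an object $(\rho,\{\Psi_\sigma\})$ of $\CCC_{G,H}$.

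For essential surjectivity, given such an object, define $\pi:G\to\GL(M)$ by $\pi(h\tilde\sigma):=\rho(h)\circ\Psi_\sigma$, a composition of $A$-linear automorphisms. The one substantive check is that $\pi$ is a homomorphism. Given $g=h\tilde\sigma$ and $g'=h'\tilde\tau$, normality lets me rewrite
$$
gg' = h(\tilde\sigma h'\tilde\sigma^{-1})\,h_{\sigma,\tau}\,\widetilde{\sigma\tau},
$$
so $\pi(gg') = \rho(h)\,\rho(\tilde\sigma h'\tilde\sigma^{-1})\,\rho(h_{\sigma,\tau})\,\Psi_{\sigma\tau}$. On the other hand $\pi(g)\pi(g') = \rho(h)\Psi_\sigma\rho(h')\Psi_\tau$, and the intertwining property of $\Psi_\sigma:\rho\to\tilde\sigma^\ast\rho$ lets me slide $\Psi_\sigma$ past $\rho(h')$ at the cost of conjugating the latter by $\tilde\sigma$, producing $\rho(h)\rho(\tilde\sigma h'\tilde\sigma^{-1})\Psi_\sigma\Psi_\tau$. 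Equality of the two expressions is then exactly the cocycle condition \eqref{cocycle-group}, read as an equality of underlying $A$-linear maps. Taking $\tilde e = e$, the cocycle with $\sigma=\tau=e$ forces $\Psi_e=\Id$, so $\pi_{|H}=\rho$; and by construction $\pi(\tilde\sigma)=\Psi_\sigma$, giving $F(\pi)=(\rho,\{\Psi_\sigma\})$.

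For full faithfulness, let $\pi,\pi'\in\Rep_A(G)$ and let $\Phi:M\to M'$ be $A$-linear. The conditions defining a morphism $F(\pi)\to F(\pi')$ in $\CCC_{G,H}$ are $H$-equivariance together with $\Psi'_\sigma\circ\Phi=(\tilde\sigma^\ast\Phi)\circ\Psi_\sigma$, which as an equality of $A$-linear maps reads $\pi'(\tilde\sigma)\Phi=\Phi\pi(\tilde\sigma)$ for every $\sigma\in G/H$. Combined with $H$-equivariance, these give $\Phi\pi(h\tilde\sigma)=\pi'(h\tilde\sigma)\Phi$ for all $g=h\tilde\sigma$, i.e.\ $\Phi$ is a morphism of $G$-representations; the converse is immediate from the definition of $F$. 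The main (and essentially only) obstacle is the homomorphism check for $\pi$: the calculation is short, but it hinges on lining up the conjugation-by-$\tilde\sigma$ coming from the intertwining property of $\Psi_\sigma$ with the defect $h_{\sigma,\tau}$ introduced by the choice of representatives, which is exactly what the cocycle condition is designed to repair.
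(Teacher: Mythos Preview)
Your proof is correct and follows essentially the same route as the paper's: the construction $\pi(h\tilde\sigma):=\rho(h)\circ\Psi_\sigma$ for essential surjectivity and the verification that $\CCC_{G,H}$-morphisms are exactly $G$-equivariant maps for full faithfulness are identical to what the paper does. Your explicit remark that the cocycle forces $\Psi_e=\Id$ (under the harmless normalization $\tilde e=e$) is a detail the paper leaves implicit but needs in order to conclude $F(\pi)=(\rho,\{\Psi_\sigma\})$.
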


\begin{proof}
 $F$ is clearly flat: a morphism between two representations is zero if and only if it is zero when restricted to a given subgroup. Let us check that it is faithful: let $\Phi:\pi_H\to\pi'_H$ be a homomorphism in $\CCC_{G,H}$, we need to show that it is also a homomorphism of representations beween $\pi$ and $\pi'$.
 
  Let $g\in G$, and write $g=h \tilde\sigma$, where $\sigma$ is the class of $g$ in $G/H$ and $h\in H$. Then
  $$
  \Phi\circ\pi(g)=(\tilde\sigma^\ast\Phi)\circ\pi(g)=(\tilde\sigma^\ast\Phi)\circ\pi(\tilde\sigma h)=(\tilde\sigma^\ast\Phi)\circ\Psi_\sigma\circ\pi(h)=
  $$
  $$
  =\Psi'_\sigma\circ\Phi\circ\pi(h)=\Psi'_\sigma\circ\pi'(h)\circ\Phi=\pi'(\tilde\sigma h)\circ\Phi=\pi'(g)\circ\Phi.$$

 Finally, let us check that $F$ is essentially surjective. Given an object $(\rho,\{\Psi_\sigma\})$ of $\mathcal C_{G,H}$ and $g\in G$, write $g=h\tilde\sigma$ as above, and let $\pi(g):M\to M$ be the automorphism given by $\pi(g)=\rho(h)\circ\Psi_\sigma$. This defines a representation of $G$: if $g'=h'\tilde\sigma'$ with $h'\in H$, then $gg'=h \tilde\sigma h'\tilde\sigma' =(h\tilde\sigma h'\tilde\sigma^{-1})(\tilde\sigma\tilde\sigma')=(h\tilde\sigma h'\tilde\sigma^{-1}h_{\sigma,\sigma'})\widetilde{\sigma\sigma'}$, so
 $$
 \pi(gg')=\rho(h\tilde\sigma h'\tilde\sigma^{-1}h_{\sigma,\sigma'})\circ\Psi_{\sigma\sigma'}=\rho(h)\circ\rho(\tilde\sigma h'\tilde\sigma^{-1})\circ\rho(h_{\sigma,\sigma'})\circ\Psi_{\sigma\sigma'}=
 $$
 $$
 =\rho(h)\circ\rho(\tilde\sigma h'\tilde\sigma^{-1})\circ(\tilde\sigma'^\ast\Psi_\sigma)\circ \Psi_{\sigma'}=\rho(h)\circ\Psi_\sigma\circ\rho(h')\circ\Psi_{\sigma'}=\pi(g)\circ\pi(g')
 $$
 since $\Psi_\sigma\circ\rho(h')=(\tilde\sigma^\ast\rho)(h')\circ\Psi_\sigma=\rho(\tilde\sigma h'\tilde\sigma^{-1})\circ\Psi_\sigma=\rho(\tilde\sigma h'\tilde\sigma^{-1})\circ(\tilde\sigma'^\ast\Psi_\sigma)$, given that $\Psi_\sigma$ is an isomorphism between $\rho$ and $\tilde\sigma^\ast\rho$.
 
 This gives an object of $\mathrm{Rep}_A(G)$ whose associated object of $\CCC_{G,H}$ is $(\rho,\{\Psi_\sigma\})$.
\end{proof}

Using this equivalence of categories, we can characterize the tensor induction of $\rho:H\to\GL(M)$ as the representation of $G$ corresponding to the object of $\mathcal C_{G,H}$ given by the representation
$$
{\mathcal M}:=\bigotimes_{\tau\in G/H}\tilde\tau^\ast M
$$
of $H$ and the isomorphisms
$$
\Psi_\sigma:{\mathcal M}\to \tilde\sigma^\ast{\mathcal M}=\bigotimes_{\tau\in G/H} (\tilde\tau\tilde\sigma)^\ast M =\bigotimes_{\tau\in G/H} (h_{\tau,\sigma}\widetilde{\tau\sigma})^\ast M 
=\bigotimes_{\tau\in G/H} \widetilde{\tau\sigma}^\ast h_{\tau,\sigma}^\ast M
$$
given by 
$$
\bigotimes_{\tau\in G/H} m_\tau\mapsto\bigotimes_{\tau\in G/H}\rho(h_{\tau,\sigma})(m_{\tau\sigma}).
$$
An easy but tedious computation shows that this is indeed an object of $\mathcal C_{G,H}$.

\section{Tensor direct image of sheaves}

In this section we will give a geometric definition of the tensor direct image of a $\ell$-adic sheaf which extends the one given in \cite[10.5]{katz1990esa} for lisse sheaves to arbitrary constructible sheaves and even to the derived category.

Fix a prime $\ell$, let $E_\lambda$ be a finite extension of ${\mathbb Q}_\ell$, ${\mathfrak o}_\lambda$ its ring of integers, with maximal ideal ${\mathfrak m}_\lambda$, and let $A={\mathfrak o}_\lambda/{\mathfrak m}_\lambda^n$ for some $n\geq 1$. Let $\pi:Y\to X$ be a Galois finite \'etale morphism of schemes on which $\ell$ is invertible, with Galois group $G=\mathrm{Aut}(Y/X)$. By \cite[I.5.4]{milne1980ecv}, this means that the morphism
$$
\coprod_{\sigma\in G} Y_\sigma\to Y\times_X Y
$$
given by $y\in Y_\sigma\cong Y\mapsto (y,\sigma y)$ is an isomorphism.

Following the construction given in the previous section for group representations, let $\mathcal C_{X,Y}$ be the following category: the objects in $\mathcal C_{X,Y}$ consist of a constructible \'etale sheaf of free $A$-modules $\FF$ on $Y$, together with isomorphisms $\Psi_\sigma:\FF\to\sigma^\ast\FF$ for every $\sigma\in G$ satisfying the cocycle condition:
$$
(\tau^\ast\Psi_\sigma)\circ\Psi_\tau=\Phi_{\sigma,\tau}\circ\Psi_{\sigma\tau}:\FF\to\tau^\ast\sigma^\ast\FF
$$
for every $\sigma,\tau\in G$, where $\Phi_{\sigma,\tau}:(\sigma\tau)^\ast\FF\to\tau^\ast\sigma^\ast\FF$ is the isomorphism induced by the natural transformation $(\sigma\tau)^\ast\to\tau^\ast\sigma^\ast$ (note that this implies that $\Psi_{\mathrm{Id}_Y}=\mathrm{Id}_\FF$). A morphism between two objects $(\FF,\{\Psi_\sigma\}_{\sigma\in G})$ and $(\FF',\{\Psi'_\sigma\}_{\sigma\in G})$ is a morphism $\rho:\FF\to\FF'$ such that $\Psi'_\sigma\circ\rho=(\sigma^\ast\rho)\circ\Psi_\sigma$ for every $\sigma\in G$.

For every \'etale sheaf of free $A$-modules $\GGG$ on $X$, the sheaf $\FF:=\pi^\ast\GGG$ together with the isomorphisms $\Psi_\sigma:\FF\to\sigma^\ast\FF$ induced by the natural transformations $\pi^\ast=(\pi\sigma)^\ast\to\sigma^\ast\pi^\ast$ is an object of $\mathcal C_{X,Y}$, and for every morphism $\rho:\GGG\to\GGG'$ of sheaves on $X$ the restriction $\pi^\ast\rho:\pi^\ast\GGG\to\pi^\ast\GGG'$ is clearly a morphism in $\CCC_{X,Y}$. This defines a functor $F$ from the category of \'etale sheaves of free $A$-modules on $X$ to $\mathcal C_{X,Y}$. The analogue to Proposition \ref{eqcat} is the following

\begin{proposition}\label{eqcat2} $F$ is an equivalence of categories.
\end{proposition}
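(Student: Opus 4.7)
The plan is to deduce the result from classical faithfully flat descent applied to the cover $\pi$. Since $\pi$ is a surjective étale morphism it is in particular an fppf covering, so by the standard descent theorem (e.g.\ \cite[Chapter I]{milne1980ecv}) pullback along $\pi$ induces an equivalence between the category of étale sheaves of free $A$-modules on $X$ and the category $\mathrm{Desc}(Y/X)$ of sheaves $\FF$ on $Y$ equipped with a descent datum $\varphi:p_1^\ast\FF\xrightarrow{\sim}p_2^\ast\FF$ on $Y\times_X Y$ satisfying the cocycle identity $p_{13}^\ast\varphi=p_{23}^\ast\varphi\circ p_{12}^\ast\varphi$ on $Y\times_X Y\times_X Y$. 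It therefore suffices to construct an equivalence $\mathrm{Desc}(Y/X)\simeq\CCC_{X,Y}$ that intertwines the descent-data form of $\pi^\ast$ with the functor $F$ defined in the excerpt.

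Such an equivalence is built from the Galois decomposition $Y\times_X Y\cong\coprod_{\sigma\in G} Y_\sigma$ recalled just before the definition of $\CCC_{X,Y}$, together with its iterate $Y\times_X Y\times_X Y\cong\coprod_{(\sigma,\tau)\in G\times G}Y_{\sigma,\tau}$, where $Y_{\sigma,\tau}\cong Y$ embeds via $y\mapsto(y,\sigma y,\sigma\tau y)$. Under the first decomposition $p_1$ and $p_2$ restrict on $Y_\sigma$ to $\mathrm{Id}_Y$ and $\sigma$ respectively, so giving $\varphi$ is the same as giving a family of isomorphisms $\Psi_\sigma:\FF\to\sigma^\ast\FF$ indexed by $G$. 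Under the second decomposition the projections $p_{12},p_{23},p_{13}$ restricted to $Y_{\sigma,\tau}$ factor through the components indexed by $\sigma$, $\tau$ and $\sigma\tau$ of $Y\times_X Y$; the usual cocycle relation, read on the $(\sigma,\tau)$-component after inserting the canonical natural transformation $\Phi_{\sigma,\tau}:(\sigma\tau)^\ast\to\tau^\ast\sigma^\ast$ that compares the two possible orderings of the pullback, becomes exactly
$$(\tau^\ast\Psi_\sigma)\circ\Psi_\tau=\Phi_{\sigma,\tau}\circ\Psi_{\sigma\tau},$$
which is the cocycle condition of $\CCC_{X,Y}$. The morphism condition is a direct componentwise translation of the compatibility of a map $\rho:\FF\to\FF'$ with $\varphi$ and $\varphi'$. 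Finally, one must check that the descent datum on $\pi^\ast\GGG$ produced by faithfully flat descent is, by construction, induced by the natural transformations $\pi^\ast=(\pi\sigma)^\ast\to\sigma^\ast\pi^\ast$, so that the composite equivalence sends $\GGG$ to $F(\GGG)$.

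The main obstacle is the combinatorial bookkeeping in the previous paragraph: one must fix conventions (left versus right $G$-action on $Y$, contravariance of pullback under composition) and carefully track which component of $\coprod Y_{\sigma,\tau}$ maps to which component of $\coprod Y_\sigma$ under each of $p_{12}, p_{23}, p_{13}$. Doing this correctly is precisely what forces the appearance of $\Phi_{\sigma,\tau}$ on the right-hand side of the cocycle rather than an identity. Once this check is performed on one component, the equivalence $\mathrm{Desc}(Y/X)\simeq\CCC_{X,Y}$ is a strict isomorphism of categories, and composing with faithfully flat descent yields the claim.
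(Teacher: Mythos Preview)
Your proposal is correct and follows essentially the same approach as the paper: both reduce the statement to classical descent theory for the \'etale cover $\pi$ and then translate a descent datum $\varphi:p_1^\ast\FF\to p_2^\ast\FF$ into a family $\{\Psi_\sigma\}_{\sigma\in G}$ via the Galois decomposition $Y\times_X Y\cong\coprod_\sigma Y_\sigma$, checking that the usual cocycle condition on the triple fibre product matches the defining condition of $\CCC_{X,Y}$. The only cosmetic difference is the choice of coordinates on $Y\times_X Y\times_X Y$ (you index by $(y,\sigma y,\sigma\tau y)$, the paper by $(y,\sigma y,\tau y)$ followed by a change of variables), which is immaterial.
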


\begin{proof}

Consider the diagram
$$
\begin{tikzcd}
Y\times_X Y\times_X Y\arrow[r, shift left=3.5, "p_{12}"] \arrow[r, "p_{13}"] \arrow[r, shift right=3.5, "p_{23}"] & Y\times_X Y \arrow[r, shift left=2, "p_1"] \arrow[r, shift right=2, "p_2"] &  Y \arrow[r, "\pi"] & X
\end{tikzcd}
$$
By descent theory (see eg. \cite[Lemma 7.25.5]{stacks}), the pull-back $\FF\to\pi^\ast\FF$ is an equivalence of categories between the category of étale sheaves of free $A$-modules on $X$ and the category of descent data of \'etale sheaves of free $A$-modules for the \'etale cover $\pi:Y\to X$ (that is, an étale sheaf $\FF$ of free $A$-modules on $Y$ endowed with an isomorphism $\Psi:p_1^\ast\FF\to p_2^\ast\FF$ such that $p_{13}^\ast\Psi=(p_{23}^\ast\Psi)\circ(p_{12}^\ast\Psi)$ modulo the natural transformations $p_{13}^\ast p_1^\ast\cong p_{12}^\ast p_1^\ast$ and $p_{13}^\ast p_2^\ast\cong p_{23}^\ast p_2^\ast$).

Remember that there is a natural isomorphism
$$
\coprod_{\sigma\in G} Y_\sigma\stackrel{\cong}{\to} Y\times_X Y
$$
where $Y_\sigma\cong Y$, given by $y\in Y_\sigma\mapsto (y,\sigma y)$. Under this isomorphism, $p_1^\ast\FF$ and $p_2^\ast\FF$ correspond to $\FF$ and $\sigma^\ast\FF$ on $Y_\sigma$, so giving an isomorphism $p_1^\ast\FF\to p_2^\ast\FF$ is equivalent to giving an isomorphism $\FF\to\sigma^\ast\FF$ for every $\sigma\in G$. So every object $(\FF,\{\Psi_\sigma\})$ of $\mathcal C_{X,Y}$ determines an isomorphism $\Psi:p_1^\ast\FF\to p_2^\ast\FF$.

Let us check that this isomorphism satisfies the cocycle condition. Notice that
$$
(Y\times_X Y)\times_X Y\cong (\coprod_{\sigma\in G} Y_\sigma)\times_X Y\cong\coprod_{\sigma\in G}(Y_\sigma\times_X Y)\cong\coprod_{\sigma,\tau\in G}Y_{(\sigma,\tau)}
$$
where, under this isomorphism, $y \in Y_{(\sigma,\tau)}$ corresponds to $(y,\sigma y,\tau y)\in Y\times_X Y\times_X Y$. In particular, $p_{12},p_{13}$ and $p_{23}$, viewed as maps $\coprod_{\sigma,\tau\in G} Y_{(\sigma,\tau)}\to\coprod_{\sigma\in G} Y_\sigma$, take $y \in Y_{(\sigma,\tau)}$ to $y\in Y_\sigma$, $y\in Y_\tau$ and $\sigma y\in Y_{\tau\sigma^{-1}}$ respectively, and the natural transformations $p_{13}^\ast p_1^\ast\cong p_{12}^\ast p_1^\ast$ and $p_{13}^\ast p_2^\ast\cong p_{23}^\ast p_2^\ast$ correspond to the identity and the natural transformation $\tau^\ast\cong\sigma^\ast(\tau\sigma^{-1})^\ast$ respectively. Then (after a ``change of variables'' $(\sigma,\tau)\mapsto(\tau,\sigma\tau)$):
$$(p_{12}^\ast\Psi)_{|Y_{(\tau,\sigma\tau)}}=\Psi_\tau:\FF\to\tau^\ast \FF,$$
$$(p_{13}^\ast\Psi)_{|Y_{(\tau,\sigma\tau)}}=\Psi_{\sigma\tau}:\FF\to(\sigma\tau)^\ast \FF \mbox{ and}$$
$$
(p_{23}^\ast\Psi)_{|Y_{(\tau,\sigma\tau)}}=\tau^\ast\Psi_{\sigma}:\tau^\ast\FF\to\tau^\ast\sigma^\ast\FF
, \mbox{ so}$$
$(p_{23}^\ast\phi)\circ(p_{12}^\ast\phi)$, in the connected component $Y_{(\tau,\sigma\tau)}$, translates to $\Phi_{\sigma,\tau}\circ\Psi_{\sigma\tau}=(\tau^\ast\Psi_\sigma)\circ\Psi_\tau$, which holds for all objects of $\mathcal C_{X,Y}$ by definition.

This defines an equivalence of categories between $\mathcal C_{X,Y}$ and the category of descent data for $\pi:Y\to X$, which is equivalent via $\pi^\ast$ to the category of sheaves on $X$.
\end{proof}

It is clear that, under this correspondence, locally constant objects of $\mathcal C_{X,Y}$ (i.e. objects whose sheaf is locally constant) correspond to locally constant sheaves on $X$. By passing to the limit, this equivalence of categories also applies to sheaves with coefficients on ${\mathfrak o}_\lambda$ and on $E_\lambda$.

The equivalence of categories is compatible with pull-backs in the following sense: Let $g:Z\to X$ be a morpism of schemes and $\tilde \pi:Z\times_X Y\to Z$ the base change of $\pi$ with respect to $g$. Then $\tilde \pi$ is also a Galois \'etale morphism with Galois group $G$, with the elements of $G$ acting on $Z\times_X Y$ via the second factor. Let $({\mathcal F},\{\Psi_\sigma\}_{\sigma\in G})$ be an object of $\CCC_{X,Y}$. Taking pull-backs with respect the projection map $\tilde g:Z\times_X Y\to Y$ we get a sheaf $\tilde g^\ast\FF$ on $Z\times_X Y$ and isomorphisms $\tilde g^\ast\Psi_\sigma:\tilde g^\ast\FF\to\tilde g^\ast\sigma^\ast\FF$ for every $\sigma\in G$. By composition with the natural transformations $\tilde g^\ast\sigma^\ast\cong(\sigma\circ\tilde g)^\ast= (\tilde g\circ(1\times\sigma))^\ast\cong(1\times\sigma)^\ast \tilde g^\ast$ we get isomorphisms $\tilde\Psi_\sigma:\tilde g^\ast\FF\to(1\times\sigma)^\ast(\tilde g^\ast\FF)$ for every $\sigma \in G$.

\begin{proposition}\label{base-change} $(\tilde g^\ast\FF,\{\tilde\Psi_\sigma\}_{\sigma\in G})$ is an object of $\CCC_{Z,Z\times_X Y}$. If $({\mathcal F},\{\Psi_\sigma\})$ corresponds to the sheaf $\GGG$ on $X$, $(\tilde g^\ast\FF,\{\tilde\Psi_\sigma\})$ corresponds to $g^\ast\GGG$ on $Z$.
\end{proposition}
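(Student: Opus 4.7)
The plan is to reduce both assertions to a bookkeeping exercise about the base change natural transformations $\tilde g^\ast\sigma^\ast\cong(1\times\sigma)^\ast\tilde g^\ast$, using the fact that by Proposition \ref{eqcat2} it suffices to verify the two claims for objects in the essential image of $F$.

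First I would dispose of the cocycle condition for $(\tilde g^\ast\FF,\{\tilde\Psi_\sigma\})$. Starting from the cocycle relation $(\tau^\ast\Psi_\sigma)\circ\Psi_\tau=\Phi_{\sigma,\tau}\circ\Psi_{\sigma\tau}$ on $Y$, apply the pullback functor $\tilde g^\ast$ and then compose on both sides with the natural transformations $\tilde g^\ast\sigma^\ast\cong(1\times\sigma)^\ast\tilde g^\ast$, $\tilde g^\ast\tau^\ast\cong(1\times\tau)^\ast\tilde g^\ast$, and $\tilde g^\ast(\sigma\tau)^\ast\cong(1\times\sigma\tau)^\ast\tilde g^\ast$. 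The left-hand side becomes $((1\times\tau)^\ast\tilde\Psi_\sigma)\circ\tilde\Psi_\tau$ once one checks that applying $\tilde g^\ast$ to $\tau^\ast\Psi_\sigma$ and then rearranging the natural isomorphisms yields $(1\times\tau)^\ast\tilde\Psi_\sigma$, which is a coherence statement about the composite $\tilde g^\ast\tau^\ast\sigma^\ast\cong(1\times\tau)^\ast(1\times\sigma)^\ast\tilde g^\ast$. The right-hand side becomes $\tilde\Phi_{\sigma,\tau}\circ\tilde\Psi_{\sigma\tau}$ for the analogous reason, where $\tilde\Phi_{\sigma,\tau}$ is the canonical iso between $(1\times\sigma\tau)^\ast$ and $(1\times\tau)^\ast(1\times\sigma)^\ast$.

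Next I would verify the identification of the pullback as an object of $\CCC_{Z,Z\times_X Y}$. Since $F$ is essentially surjective, I may assume $\FF=\pi^\ast\GGG$ with $\Psi_\sigma:\pi^\ast\GGG\to\sigma^\ast\pi^\ast\GGG$ the canonical isomorphism coming from $\pi\circ\sigma=\pi$. The fiber square
$$
\begin{tikzcd}
Z\times_X Y\arrow[r,"\tilde g"]\arrow[d,"\tilde\pi"] & Y\arrow[d,"\pi"]\\
Z\arrow[r,"g"] & X
\end{tikzcd}
$$
provides a canonical identification $\tilde g^\ast\pi^\ast\GGG\cong(\pi\tilde g)^\ast\GGG=(g\tilde\pi)^\ast\GGG\cong\tilde\pi^\ast g^\ast\GGG$. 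Under this identification I would show that $\tilde\Psi_\sigma$, which by construction is the composite of $\tilde g^\ast$ applied to the canonical $\Psi_\sigma$ with the base-change natural transformation, corresponds exactly to the canonical isomorphism $\tilde\pi^\ast g^\ast\GGG\to(1\times\sigma)^\ast\tilde\pi^\ast g^\ast\GGG$ obtained from $\tilde\pi\circ(1\times\sigma)=\tilde\pi$. This is a direct consequence of functoriality of the $(-)^\ast$ operation applied to the commuting square $\pi\tilde g=g\tilde\pi=g\tilde\pi(1\times\sigma)$. Hence $(\tilde g^\ast\FF,\{\tilde\Psi_\sigma\})\cong F(g^\ast\GGG)$ in $\CCC_{Z,Z\times_X Y}$.

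The main obstacle, and the only real content beyond formal manipulation, is the coherence verification in the cocycle step: one must ensure that the various instances of the natural isomorphisms $(\alpha\beta)^\ast\cong\beta^\ast\alpha^\ast$ that intervene in rewriting $\tilde g^\ast\tau^\ast\sigma^\ast$ as $(1\times\tau)^\ast(1\times\sigma)^\ast\tilde g^\ast$ compose correctly and match the isomorphism $\Phi_{\sigma,\tau}$ translated through $\tilde g^\ast$ to the corresponding isomorphism $\tilde\Phi_{\sigma,\tau}$ on $Z\times_X Y$. Both parts are essentially tedious diagram chases, so I would present just enough of the verification to identify the key coherence diagram and leave the rest implicit, much as was done for the group-theoretic analogue following Proposition \ref{eqcat}.
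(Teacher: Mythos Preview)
Your proposal is correct and your second step---reducing to $\FF=\pi^\ast\GGG$ via essential surjectivity and then using the commuting square $\pi\tilde g=g\tilde\pi$ to identify $\tilde g^\ast\pi^\ast\GGG$ with $\tilde\pi^\ast g^\ast\GGG$ together with their canonical descent isomorphisms---is exactly what the paper does.

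The one difference is organizational: you verify the cocycle condition for $(\tilde g^\ast\FF,\{\tilde\Psi_\sigma\})$ directly for arbitrary objects of $\CCC_{X,Y}$, via the coherence chase you describe, and only afterwards invoke the essential image reduction for the second claim. The paper is more economical: it applies the reduction to $\FF=\pi^\ast\GGG$ from the outset and proves both assertions at once, since showing that the pulled-back object is isomorphic to $F(g^\ast\GGG)$ in particular shows it lies in $\CCC_{Z,Z\times_X Y}$. Your direct cocycle verification is not wrong, but it is redundant given the reduction you already planned to use; the paper's route saves you the ``tedious diagram chase'' you flagged as the main obstacle.
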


\begin{proof}
 Given the equivalence of categories above, we can assume that $({\mathcal F},\{\Psi_\sigma\})$ is associated to a sheaf $\GGG$ on $X$, that is, $\FF=\pi^\ast\GGG$ and $\Psi_\sigma:\FF\to\sigma^\ast\FF$ is given by the natural transformation $\pi^\ast=(\pi\sigma)^\ast\cong\sigma^\ast \pi^\ast$ for every $\sigma\in G$. Then its pull-back is given by $\tilde g^\ast\FF=\tilde g^\ast \pi^\ast\GGG$, with $\tilde\Psi_\sigma:\tilde g^\ast \pi^\ast\GGG\to(1\times\sigma)^\ast \tilde g^\ast \pi^\ast\GGG$ the isomorphism induced by the natural transformation $\tilde g^\ast \pi^\ast =\tilde g^\ast(\pi\sigma)^\ast \cong \tilde g^\ast(\sigma^\ast \pi^\ast)= (\tilde g^\ast \sigma^\ast) \pi^\ast\cong((1\times\sigma)^\ast\tilde g^\ast)\pi^\ast=(1\times\sigma)^\ast(\tilde g^\ast \pi^\ast)$.
 
 Composing with the natural transformation $\tilde \pi^\ast g^\ast\cong(g\tilde \pi)^\ast=(\pi\tilde g)^\ast\cong\tilde g^\ast \pi^\ast$, we observe that this pull-back is isomorphic to the object of $\CCC_{Z,Z\times_X Y}$ given by $\tilde \pi^\ast g^\ast\GGG$ with the isomorphisms $\tilde \Psi'_\sigma:\tilde \pi^\ast g^\ast\GGG\to(1\times\sigma)^\ast\tilde \pi^\ast g^\ast\GGG$ induced by the natural transformation $\tilde \pi^\ast=(\tilde \pi(1\times\sigma))^\ast\cong(1\times\sigma)^\ast\tilde \pi^\ast$, which is precisely the object of $\CCC_{Z,Z\times_X Y}$ associated to the sheaf $g^\ast\GGG$.
\end{proof}

Suppose now that $Y$ (and therefore $X$) is connected. Pick a geometric point $\bar y \hookrightarrow Y$, and let $\bar x= f(\bar y)\hookrightarrow X$. Let $\pi_1(Y,\bar y)$ and $\pi_1(X,\bar x)$ be the fundamental groups of $Y$ and $X$ with base points $\bar y$ and $\bar x$ respectively. We have an exact sequence
$$
1\to \pi_1(Y,\bar y) \to \pi_1(X,\bar x)\to G\to 1.
$$
There is an equivalence between the categories of locally constant sheaves of free $A$-modules on $X$ (respectively $Y$) and of continuous representations of $\pi_1(X,\bar x)$ (resp. of $\pi_1(Y,\bar y)$) on finite dimensional free $A$-modules \cite[V.1.2]{milne1980ecv}.

For every $\sigma\in G$, pick an element $\tilde\sigma\in \pi_1(X,\bar x)$ that lifts $\sigma$ and, for $\sigma,\tau\in G$, denote by $h_{\sigma,\tau}$ the element $\tilde\sigma\cdot\tilde\tau\cdot\widetilde{\sigma\tau}^{-1}\in \pi_1(Y,\bar y)$. If a sheaf $\FF$ on $Y$ corresponds to the representation $\rho$ of $Y$, then $\sigma^\ast\FF$ corresponds to $\tilde\sigma^\ast\rho$ (given by $(\tilde\sigma^\ast\rho)(h)=\rho(\tilde\sigma h\tilde\sigma^{-1})$). The natural transformation $(\sigma\tau)^\ast\to\tau^\ast\sigma^\ast$ corresponds to the natural transformation $\widetilde{\sigma\tau}^\ast\to\tilde\tau^\ast\tilde\sigma^\ast$ in the category of representations of $\pi_1(Y,\bar y)$ given, for every $\rho$, by the isomorphism of representations $\widetilde{\sigma\tau}^\ast\rho\to\tilde\tau^\ast\tilde\sigma^\ast \rho$ defined by $m\mapsto \rho(h_{\sigma,\tau})(m)$.

Let $(\FF,\{\Psi_\sigma\}_{\sigma\in G})$ be an object of $\mathcal C_{X,Y}$ with $\FF$ locally constant, $\rho$ the representation of $\pi_1(Y,\bar y)$ corresponding to $\FF$, and $\tilde\Psi_\sigma:\rho\to\tilde\sigma^\ast\rho$ the isomorphism corresponding to $\Psi_\sigma$. Then the paragraph above and the cocycle equation (\ref{cocycle-group}) imply that $(\rho,\{\tilde\Psi_\sigma\}_{\sigma\in G})$ is an object of $\mathcal C_{\pi_1(X,\bar x),\pi_1(Y,\bar y)}$. If $\FF=\pi^\ast\GGG$ for a locally constant $\GGG$ on $X$, then $\rho$ is the restriction to $\pi_1(Y,\bar y)$ of the representation $\tilde\rho$ of $\pi_1(X,\bar x)$ associated to $\GGG$, and the natural isomorphism $\FF\to\sigma^\ast\FF$ translates to the isomorphism of representations $\rho\to\tilde\sigma^\ast\rho$ given by $m\mapsto\tilde\rho(\tilde\sigma)(m)$. To sum up:

\begin{proposition}\label{correspondence}
If $(\FF,\{\Psi_\sigma\}_{\sigma\in G})$ is an object ot $\mathcal C_{X,Y}$ corresponding to the locally constant sheaf $\GGG$ on $Y$, then the object $(\rho,\{\tilde\Psi_\sigma\}_{\sigma\in G})$ of $\mathcal C_{\pi_1(X,\bar x),\pi_1(Y,\bar y)}$ corresponds to the representation $\tilde\rho$ of $\pi_1(X,\bar x)$ defined by $\GGG$.
\end{proposition}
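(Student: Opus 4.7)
The plan is to combine three equivalences of categories: the equivalence $F$ of Proposition \ref{eqcat2} between sheaves on $X$ and $\CCC_{X,Y}$, the equivalence of Proposition \ref{eqcat} between representations of $\pi_1(X,\bar x)$ and $\CCC_{\pi_1(X,\bar x),\pi_1(Y,\bar y)}$, and the classical equivalence between locally constant \'etale sheaves of free $A$-modules on a connected scheme and continuous representations of its fundamental group on finite free $A$-modules. The goal is to check that these fit into a commutative square (up to natural isomorphism), so that the dictionary $(\FF,\{\Psi_\sigma\}) \leftrightarrow (\rho,\{\tilde\Psi_\sigma\})$ described just before the proposition takes the essential image of Proposition \ref{eqcat2} on locally constant objects to the essential image of Proposition \ref{eqcat}.

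Concretely, I would start from a representation $\tilde\rho$ of $\pi_1(X,\bar x)$ and the associated locally constant sheaf $\GGG$ on $X$. By Proposition \ref{eqcat2}, $\GGG$ determines the object $(\pi^\ast\GGG,\{\Psi_\sigma\})$ of $\CCC_{X,Y}$ with $\Psi_\sigma$ induced by the natural transformation $\pi^\ast=(\pi\sigma)^\ast\to\sigma^\ast\pi^\ast$; on the representation side, Proposition \ref{eqcat} attaches to $\tilde\rho$ the object of $\CCC_{\pi_1(X,\bar x),\pi_1(Y,\bar y)}$ whose underlying representation is $\rho=\tilde\rho_{|\pi_1(Y,\bar y)}$ with isomorphisms $m\mapsto\tilde\rho(\tilde\sigma)(m)$. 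It then suffices to verify two things: that $\pi^\ast\GGG$ corresponds under the sheaf-representation dictionary to the restricted representation $\rho$, and that the natural transformation $\pi^\ast\to\sigma^\ast\pi^\ast$ acting on $\GGG$ translates into the isomorphism $m\mapsto\tilde\rho(\tilde\sigma)(m)$ on $\rho$.

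The first point is standard: pullback of locally constant sheaves along a connected \'etale cover corresponds to the restriction of representations along the inclusion $\pi_1(Y,\bar y)\hookrightarrow\pi_1(X,\bar x)$. The main technical obstacle is the second point, which amounts to tracing how the chosen lift $\tilde\sigma\in\pi_1(X,\bar x)$ of $\sigma\in G$ identifies the stalks of $\pi^\ast\GGG$ at $\bar y$ and at $\sigma(\bar y)$: the path underlying $\tilde\sigma$ is precisely the data encoded by the natural transformation $\pi^\ast\to\sigma^\ast\pi^\ast$ at $\bar y$. Once this identification is in place, the matching of cocycles on the two sides is automatic, because the description of $(\sigma\tau)^\ast\to\tau^\ast\sigma^\ast$ as multiplication by $h_{\sigma,\tau}$ (recalled just before the proposition) ensures that the cocycle condition for $\{\Psi_\sigma\}$ in $\CCC_{X,Y}$ is transported to the cocycle condition (\ref{cocycle-group}) for $\{\tilde\Psi_\sigma\}$. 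Functoriality of all three equivalences then yields the claim.
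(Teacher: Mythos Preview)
Your proposal is correct and follows essentially the same approach as the paper. In fact, the paper does not give a separate proof for this proposition: it is stated as a summary (``To sum up:'') of the discussion immediately preceding it, and that discussion consists of exactly the two verifications you identify --- that $\pi^\ast\GGG$ corresponds to the restricted representation $\rho=\tilde\rho_{|\pi_1(Y,\bar y)}$, and that the natural transformation $\pi^\ast\to\sigma^\ast\pi^\ast$ translates to $m\mapsto\tilde\rho(\tilde\sigma)(m)$ --- together with the observation that the identification of $(\sigma\tau)^\ast\to\tau^\ast\sigma^\ast$ with multiplication by $\rho(h_{\sigma,\tau})$ transports the cocycle condition. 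Your framing in terms of a commutative square of equivalences is a clean way to organize this, but the content is the same.
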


Now let $\FF$ be a constructible sheaf of free $A$-modules on $Y$, where $A$ is either $E_\lambda$, ${\mathfrak o}_\lambda$ or ${\mathfrak o}_\lambda/{\mathfrak m}_\lambda^n$ for some $n\geq 1$. Take
$$
\GGG=\bigotimes_{\tau\in G}\tau^\ast\FF
$$
and, for every $\sigma\in G$, let
$$
\Psi_\sigma:\GGG=\bigotimes_{\tau\in G}\tau^\ast\FF\to\bigotimes_{\tau\in G}(\tau\sigma)^\ast\FF\cong \bigotimes_{\tau\in G}\sigma^\ast\tau^\ast\FF\cong\sigma^\ast\GGG
$$
be the natural permutation isomorphism $\otimes_{\tau} m_\tau\mapsto \otimes_{\tau} m_{\tau\sigma}$, composed with the isomorphisms induced by the natural transformations $(\tau\sigma)^\ast\to\sigma^\ast\tau^\ast$. These isomorphisms satisfy the cocycle condition, so they define an object of $\mathcal C_{X,Y}$, which corresponds to a constructible sheaf of $A$-modules on $X$.

\begin{defn}
 The {\bf tensor direct image} of $\FF$ by $\pi$ is the sheaf $\pi_{\otimes\ast}\FF$ on $X$ corresponding to this object of $\mathcal C_{X,Y}$.
\end{defn}

We now enumerate some of the basic properties of this construction

\begin{proposition}
 Let $\FF$ and $\FF'$ be constructible sheaves on $Y$. 

\begin{enumerate}\label{properties}
 \item $\pi^\ast(\pi_{\otimes\ast}\FF)\cong\bigotimes_{\tau\in G}\tau^\ast\FF$
 \item $\pi_{\otimes\ast}(\FF\otimes\FF')\cong(\pi_{\otimes\ast}\FF)\otimes(\pi_{\otimes\ast}\FF')$
 \item (base change) Let $g:Z\to X$ be a morphism of schemes, and
 $$
\begin{tikzcd}
 Z\times_X Y \arrow[r, "\tilde g"] \arrow[d, "\tilde\pi"] & Y \arrow[d,"\pi"] \\
  Z \arrow[r,"g"] & X 
\end{tikzcd}
$$
 the pull-back. Then $g^\ast(\pi_{\otimes\ast}\FF)\cong\tilde \pi_{\otimes\ast} \tilde g^\ast\FF$.
\end{enumerate}
 
\end{proposition}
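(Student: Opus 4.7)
The plan is to derive all three properties by tracking objects through the equivalence of categories in Proposition \ref{eqcat2} between sheaves on $X$ and objects of $\CCC_{X,Y}$. Since the functor $F$ sends a sheaf $\KK$ on $X$ to $\pi^\ast\KK$ with its canonical descent data, statement (1) is essentially a tautology: by the definition of $\pi_{\otimes\ast}\FF$, the descent datum corresponding to it has underlying sheaf $\bigotimes_{\tau\in G}\tau^\ast\FF$, and extracting underlying sheaves recovers exactly the claimed isomorphism $\pi^\ast(\pi_{\otimes\ast}\FF)\cong\bigotimes_{\tau\in G}\tau^\ast\FF$.

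For part (2), the approach is to use that $\tau^\ast$ commutes with tensor products for any morphism of schemes, giving a natural isomorphism
$$
\bigotimes_{\tau\in G}\tau^\ast(\FF\otimes\FF')\cong\left(\bigotimes_{\tau\in G}\tau^\ast\FF\right)\otimes\left(\bigotimes_{\tau\in G}\tau^\ast\FF'\right).
$$
I would then check that this isomorphism is compatible with the permutation-type isomorphisms $\Psi_\sigma$ on either side: the $\Psi_\sigma$ associated to $\FF\otimes\FF'$ acts by simultaneously permuting both the $\FF$-factors and the $\FF'$-factors, and hence factors as the tensor product of the $\Psi_\sigma$ attached to $\FF$ and to $\FF'$ respectively. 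This upgrades the isomorphism of sheaves to an isomorphism in $\CCC_{X,Y}$, and applying Proposition \ref{eqcat2} yields the desired isomorphism of sheaves on $X$.

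For part (3), I would appeal directly to Proposition \ref{base-change}. Starting with the descent datum $(\GGG,\{\Psi_\sigma\})$ defining $\pi_{\otimes\ast}\FF$, where $\GGG=\bigotimes_{\tau\in G}\tau^\ast\FF$, Proposition \ref{base-change} identifies $(\tilde g^\ast\GGG,\{\tilde\Psi_\sigma\})$ as the object of $\CCC_{Z,Z\times_X Y}$ associated to $g^\ast(\pi_{\otimes\ast}\FF)$. Since $\tilde g^\ast$ commutes with tensor products and since there is a canonical isomorphism $\tilde g^\ast\tau^\ast\cong(1\times\tau)^\ast\tilde g^\ast$ arising from the cartesian square, one obtains
$$
\tilde g^\ast\GGG\cong\bigotimes_{\tau\in G}\tilde g^\ast\tau^\ast\FF\cong\bigotimes_{\tau\in G}(1\times\tau)^\ast\tilde g^\ast\FF,
$$
which is precisely the underlying sheaf of the descent datum defining $\tilde\pi_{\otimes\ast}\tilde g^\ast\FF$. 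A direct verification then shows that the induced isomorphisms $\tilde\Psi_\sigma$ correspond under this rewriting to the permutation isomorphisms defining $\tilde\pi_{\otimes\ast}\tilde g^\ast\FF$, so applying the equivalence of categories of Proposition \ref{eqcat2} for $\tilde\pi$ yields the desired identification.

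The main obstacle throughout is the bookkeeping: in each part one must check that the various natural transformations (composition of pullbacks, pullback commuting with tensor product, and the base-change isomorphism $\tilde g^\ast\sigma^\ast\cong(1\times\sigma)^\ast\tilde g^\ast$) are compatible enough that the descent data match on the nose rather than only up to an unspecified further isomorphism. This is routine but tedious, and I would carry it out by writing down each $\Psi_\sigma$ explicitly as a composition of a permutation map with the standard natural transformations, then verifying commutativity of the resulting diagrams tensor-factor by tensor-factor.
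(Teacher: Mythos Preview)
Your proposal is correct and follows essentially the same approach as the paper: part (1) is immediate from the definition, part (2) uses the natural isomorphism $\bigotimes_\tau\tau^\ast(\FF\otimes\FF')\cong(\bigotimes_\tau\tau^\ast\FF)\otimes(\bigotimes_\tau\tau^\ast\FF')$ together with its compatibility with the $\Psi_\sigma$, and part (3) is deduced from Proposition~\ref{base-change} via the natural isomorphism $\tilde g^\ast(\bigotimes_\tau\tau^\ast\FF)\cong\bigotimes_\tau(1\times\tau)^\ast\tilde g^\ast\FF$. Your write-up is somewhat more detailed about the bookkeeping of the descent data, but the argument is the same.
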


\begin{proof}
 The first property is clear by construction. For the second one, note that there is a natural isomorphism
 $$
 \bigotimes_{\tau\in G}\tau^\ast(\FF\otimes\FF')\cong
 \bigotimes_{\tau\in G}(\tau^\ast\FF)\otimes(\tau^\ast\FF')\cong
 (\bigotimes_{\tau\in G}\tau^\ast\FF)\otimes(\bigotimes_{\tau\in G}\tau^\ast\FF')
 $$
 which commutes with the isomorphisms $\Psi_\sigma$.
 
 The third property is a particular case of Proposition \ref{base-change}, via the natural isomorphism
 $$
 \tilde g^\ast\left(\bigotimes_{\tau\in G}\tau^\ast\FF\right)\cong \bigotimes_{\tau\in G}\tilde g^\ast\tau^\ast\FF\cong \bigotimes_{\tau\in G}(1\times\tau)^\ast\tilde g^\ast\FF
 $$
\end{proof}

The next result shows that this operation does in fact generalize the tensor direct image of locally constant sheaves defined in \cite[10.5]{katz1990esa} to arbitrary (constructible) sheaves.

\begin{proposition}
 Suppose that $Y$ is connected, and let $\FF$ be a locally constant sheaf on $Y$, corresponding to the representation $\rho$ of $\pi_1(Y,\bar y)$. Then $\pi_{\otimes\ast}\FF$ is a locally contant sheaf on $X$ corresponding to the representation $\otimes-\mathrm{Ind}(\rho)$ of $\pi_1(X,\bar x)$.
\end{proposition}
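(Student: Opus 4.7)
The plan is to reduce everything to the equivalence of categories established in Propositions \ref{eqcat} and \ref{correspondence}, and then match the geometric construction of $\pi_{\otimes\ast}\FF$ with the group-theoretic characterization of tensor induction given at the end of Section 2.

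First I would observe that $\bigotimes_{\tau\in G}\tau^\ast\FF$ is locally constant, since each $\tau^\ast\FF$ is locally constant (as $\tau$ is an automorphism and $\FF$ is locally constant) and tensor products of locally constant sheaves are locally constant. Hence the object of $\CCC_{X,Y}$ defining $\pi_{\otimes\ast}\FF$ is a locally constant object, and by the equivalence described after Proposition \ref{eqcat2}, $\pi_{\otimes\ast}\FF$ is itself locally constant, so it corresponds to a representation $\tilde\pi$ of $\pi_1(X,\bar x)$.

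Next I would apply Proposition \ref{correspondence} to translate the locally constant object $(\GGG,\{\Psi_\sigma\})$ with $\GGG=\bigotimes_{\tau\in G}\tau^\ast\FF$ into an object of $\CCC_{\pi_1(X,\bar x),\pi_1(Y,\bar y)}$. Using the discussion preceding Proposition \ref{correspondence}, $\tau^\ast\FF$ corresponds to $\tilde\tau^\ast\rho$ as a representation of $\pi_1(Y,\bar y)$, and hence $\GGG$ corresponds to $\bigotimes_{\tau\in G}\tilde\tau^\ast\rho$. The natural transformation $(\tau\sigma)^\ast\to\sigma^\ast\tau^\ast$ translates, under the same dictionary, precisely to the isomorphism of representations $\widetilde{\tau\sigma}^\ast\rho\to\tilde\sigma^\ast\tilde\tau^\ast\rho$ given by multiplication by $\rho(h_{\tau,\sigma})$, where $h_{\tau,\sigma}=\tilde\tau\tilde\sigma\widetilde{\tau\sigma}^{-1}$. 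Composing with the permutation isomorphism $\otimes_\tau m_\tau\mapsto\otimes_\tau m_{\tau\sigma}$ shows that $\Psi_\sigma$ corresponds exactly to the isomorphism
$$
\bigotimes_{\tau\in G}\tilde\tau^\ast\rho\;\longrightarrow\;\tilde\sigma^\ast\Bigl(\bigotimes_{\tau\in G}\tilde\tau^\ast\rho\Bigr),\quad\bigotimes_\tau m_\tau\mapsto\bigotimes_\tau \rho(h_{\tau,\sigma})(m_{\tau\sigma})
$$
displayed at the end of Section 2.

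Finally, by the group-theoretic characterization given there, the object $(\bigotimes_{\tau\in G}\tilde\tau^\ast\rho,\{\tilde\Psi_\sigma\})$ of $\CCC_{\pi_1(X,\bar x),\pi_1(Y,\bar y)}$ is by definition the one that corresponds, under the equivalence $F$ of Proposition \ref{eqcat}, to the tensor induction $\otimes\text{-}\mathrm{Ind}(\rho)$ of $\pi_1(X,\bar x)$. Thus $\tilde\pi\cong\otimes\text{-}\mathrm{Ind}(\rho)$, as desired. The only real work lies in the bookkeeping of step two: carefully verifying that the composition of the geometric permutation isomorphism with the natural transformations $(\tau\sigma)^\ast\to\sigma^\ast\tau^\ast$ matches, component by component, the prescribed $\Psi_\sigma$ of the tensor induction; this is tedious but mechanical, and everything else is a direct appeal to the equivalences of categories already proved.
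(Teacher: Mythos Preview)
Your proposal is correct and follows essentially the same route as the paper: invoke Proposition \ref{correspondence} to pass from the geometric object $(\bigotimes_\tau\tau^\ast\FF,\{\Psi_\sigma\})$ in $\CCC_{X,Y}$ to the corresponding object $(\bigotimes_\tau\tilde\tau^\ast\rho,\{\tilde\Psi_\sigma\})$ in $\CCC_{\pi_1(X,\bar x),\pi_1(Y,\bar y)}$, and then recognize the latter as the characterization of $\otimes\text{-}\mathrm{Ind}(\rho)$ from the end of Section 2. You spell out a bit more explicitly why $\pi_{\otimes\ast}\FF$ is locally constant and how the natural transformations translate to multiplication by $\rho(h_{\tau,\sigma})$, but the argument is the same.
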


\begin{proof}
 By Proposition \ref{correspondence} and the construction of $\pi_{\otimes\ast}\FF$, the representation associated to $\pi_{\otimes\ast}\FF$ is the representation corresponding to the object of $\CCC_{\pi_1(X,\bar x),\pi_1(Y,\bar y)}$ given by $\bigotimes_{\tau\in G}\tilde\tau^\ast\rho$, with the isomorphisms
 $$
\tilde\Psi_\sigma:\bigotimes_{\tau\in G}\tilde\tau^\ast\rho\to
\bigotimes_{\tau\in G}\tilde\sigma^\ast\tilde\tau^\ast\rho
 $$
 being the composition of the permutation map $\bigotimes_{\tau\in G}\tilde\tau^\ast\rho\to\bigotimes_{\tau\in G}\widetilde{\tau\sigma}^\ast\rho$ and the tensor product of the natural isomorphisms $\widetilde{\tau\sigma}^\ast\rho\to\tilde\sigma^\ast\tilde\tau^\ast\rho$ given by $m\mapsto\rho(h_{\tau,\sigma})(m)$. But this is precisely the tensor induction of $\rho$, according to the characterization given in the previous section.
\end{proof}

We will be mainly interested in the following situation: $X$ is an algebraic variety over the finite field $k$, $k_r$ is a finite extension of $k$ of degree $r$ and $Y=X\times\Spec\; k_r$ is the extension of scalars of $X$. Then the projection $\pi:Y\to X$ induced by the inclusion $k\hookrightarrow k_r$ is a Galois \'etale map, whose Galois group is cyclic and generated by the Frobenius automorphism $\sigma$ of $\Spec\;k_r/\Spec\;k$. Here
$$
\GGG=\FF\otimes\sigma^\ast\FF\otimes\cdots\otimes\sigma^{(r-1)\ast}\FF
$$
and
$$\Psi_\sigma:\GGG\to\sigma^\ast\GGG=\sigma^\ast\FF\otimes\sigma^{2\ast}\FF\otimes\cdots\sigma^{r\ast}\FF=\sigma^\ast\FF\otimes\sigma^{2\ast}\FF\otimes\cdots\otimes\FF
$$
is given by $m_0\otimes m_1\otimes\cdots\otimes m_{r-1}\mapsto m_1\otimes m_2\otimes\cdots\otimes m_0$.

Let $x:\Spec\;k\to X$ be a $k$-valued point of $X$, $x_r:\Spec\;k_r\to\Spec\;k\to X$ the corresponding $k_r$-valued point and $\bar{x_r}=\bar x:\Spec\;\bar k\to X$ a geometric point over $x$. Let $F_x\in\pi_1(X,\bar x)$ be a geometric Frobenius element at $x$, then $F_x^r$ is in $\pi_1(Y,\bar x)$ (since the quotient is cyclic of degree $r$) and is a geometric Frobenius element of $Y$ at $\bar x_r$. The following result generalizes \cite[Proposition 2.1]{fu-wan-incomplete}

\begin{proposition}\label{frobenius}
 With the previous notation, we have
 $$
 \Tr(F_x|(\pi_{\otimes\ast}\FF)_{\bar x})=\Tr(F_x^r|\FF_{\bar x_r})
 $$
\end{proposition}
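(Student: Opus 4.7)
The plan is to reduce the statement to the already-known lisse case \cite[Proposition 2.1]{fu-wan-incomplete}, recalled earlier in the excerpt, by base-changing along the $k$-point $x$. Applying Proposition \ref{properties}(3) with $g=x:\Spec\;k\hookrightarrow X$, the base change of $\pi$ along $x$ is $\tilde\pi:\Spec\;k_r\to\Spec\;k$ (again a Galois étale cover with the same cyclic Galois group generated by $\sigma$), the projection $\Spec\;k_r\to Y$ is the canonical $k_r$-point $\tilde x=x_r$, and there is a canonical isomorphism
$$
x^\ast(\pi_{\otimes\ast}\FF)\;\cong\;\tilde\pi_{\otimes\ast}(\tilde x^\ast\FF).
$$
Taking geometric stalks at $\bar x$ on both sides, the pair $((\pi_{\otimes\ast}\FF)_{\bar x},F_x)$ is identified with the stalk of $\tilde\pi_{\otimes\ast}(\tilde x^\ast\FF)$ at the unique geometric point of $\Spec\;k$ equipped with the action of the canonical geometric Frobenius of $\bar k/k$; similarly $(\FF_{\bar x_r},F_x^r)$ is identified with the stalk of $\tilde x^\ast\FF$ equipped with the geometric Frobenius of $\bar k/k_r$. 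So we may replace $X$ by $\Spec\;k$, $Y$ by $\Spec\;k_r$, and $\FF$ by $\tilde x^\ast\FF$.

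In this reduced situation, every constructible étale sheaf is automatically locally constant (a finite continuous $\Gal(\bar k/k_r)$-module is trivialized by some finite Galois extension of $k_r$), so $\tilde x^\ast\FF$ is lisse on $\Spec\;k_r$. The lisse case \cite[Proposition 2.1]{fu-wan-incomplete} therefore applies directly and yields
$$
\Tr\bigl(F_x\mid(\tilde\pi_{\otimes\ast}\tilde x^\ast\FF)_{\bar x}\bigr)=\Tr\bigl(F_x^r\mid(\tilde x^\ast\FF)_{\bar x_r}\bigr),
$$
which, combined with the base change isomorphism above, is precisely the desired formula.

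The only genuine subtlety is the Frobenius-equivariance of the base-change isomorphism from Proposition \ref{properties}(3), which boils down to the standard functoriality of Frobenius under the pullback by a $k$-valued point: the geometric Frobenius $F_x\in\pi_1(X,\bar x)$ is by definition the image of the canonical geometric Frobenius in $\Gal(\bar k/k)=\pi_1(\Spec\;k,\bar x)$ under the map induced by $x$, and the analogous compatibility holds for $F_x^r$ and the Frobenius of $\bar k/k_r$ under $\tilde x$. Once this bookkeeping is in place, the proof consists entirely of combining Proposition \ref{properties}(3) with the already-established lisse case, and no new computation with the cyclic cocycle $\Psi_\sigma$ is required.
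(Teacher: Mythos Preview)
Your proof is correct and follows essentially the same approach as the paper: reduce via the base-change property (Proposition~\ref{properties}(3)) along the point $x:\Spec\;k\to X$ to the case $X=\Spec\;k$, $Y=\Spec\;k_r$, observe that sheaves there are automatically lisse, and invoke \cite[Proposition 2.1]{fu-wan-incomplete}. The paper's argument is identical in structure, only slightly terser in its discussion of Frobenius compatibility.
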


\begin{proof} We have a cartesian diagram
$$
\begin{tikzcd}
\Spec\;k_r \arrow[r,"x_r"] \arrow[d,"\tilde\pi"] & Y \arrow[d,"\pi"] \\
\Spec\;k \arrow[r,"x"] & X
\end{tikzcd}
$$
where $\tilde \pi:\Spec\;k_r\to\Spec\;k$ is the canonical projection.

By Proposition \ref{properties}(3), we get an isomorphism of sheaves on $\Spec\;k$ (i.e. of $\Gal(\bar k/k)$-modules) $(\pi_{\otimes\ast}\FF)_{\bar x}\cong \tilde \pi_{\otimes\ast}(\FF_{\bar x_r})$. Since every sheaf on $\Spec\;k$ is locally constant (and therefore corresponds to a representation of $\pi_1(\Spec\;k,\Spec\;\bar k)=\Gal(\bar k/k)$), as a representation of $\Gal(\bar k/k)$ $(\pi_{\otimes\ast}\FF)_{\bar x}$ is the tensor induction of the representation $\FF_{\bar x_r}$ of $\Gal(\bar k/k_r)$. By \cite[Proposition 2.1]{fu-wan-incomplete} we conclude that $\Tr(F_x|(\pi_{\otimes\ast}\FF)_{\bar x})=\Tr(F_x^r|\FF_{\bar x_r})$.
\end{proof}

\section{Tensor direct image in the derived category}

In this section we keep the same geometric setup as in the previous one: $\pi:Y\to X$ is a Galois finite étale morphism with Galois group $G$, defined over a base ring good enough for the derived category of étale sheaves to be well behaved (e.g. a field).

 Let $\ell$ be a prime invertible in $X$, $E_\lambda$, ${\mathfrak o}_\lambda$ and ${\mathfrak m}_\lambda$ as above and let $A={\mathfrak o}_{\lambda,n}={\mathfrak o}_\lambda/{\mathfrak m}_\lambda^n$ for some $n\geq 1$. Let ${\mathcal E}_X$ be the category of bounded complexes of (constructible) sheaves of $A$-modules on $X$. We have an obvious equivalence of categories between the category of bounded complexes of objects in ${\mathcal C}_{X,Y}$ and the category ${\mathcal E}_{X,Y}$ whose objects are bounded complexes of sheaves $P$ on $Y$ together with isomorphisms $P\to\sigma^\ast P$ for every $\sigma\in G$ satisfying the cocycle conditions. Therefore, from Proposition \ref{eqcat2} we deduce
 
 \begin{proposition}\label{eqcat3}
  The functor ${\mathcal E}_X\to{\mathcal E}_{X,Y}$ given by $Q\mapsto \pi^\ast Q$ is an equivalence of categories.
 \end{proposition}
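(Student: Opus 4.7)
The plan is to deduce the proposition directly from Proposition \ref{eqcat2} by passing to bounded complexes. Proposition \ref{eqcat2} provides an equivalence $\pi^\ast$ between the category of \'etale sheaves of free $A$-modules on $X$ and $\mathcal{C}_{X,Y}$. Any equivalence of additive categories extends to an equivalence on their categories of bounded complexes, simply by applying the functor and a chosen quasi-inverse termwise; the natural isomorphisms witnessing the equivalence extend termwise as well, and since $\pi^\ast$ is exact it preserves differentials and composition. So my first step is to invoke this general fact to obtain an equivalence between $\mathcal{E}_X$ and the category of bounded complexes in $\mathcal{C}_{X,Y}$, sending $Q^\bullet\mapsto\pi^\ast Q^\bullet$.

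The second step is to make precise the ``obvious equivalence'' asserted immediately before the statement. A bounded complex $(\FF^\bullet,d^\bullet)$ of objects in $\mathcal{C}_{X,Y}$ consists of objects $(\FF^i,\{\Psi^i_\sigma\}_{\sigma\in G})$ together with differentials $d^i:\FF^i\to\FF^{i+1}$ that are morphisms in $\mathcal{C}_{X,Y}$. Unpacking the latter condition gives $\Psi^{i+1}_\sigma\circ d^i=(\sigma^\ast d^i)\circ\Psi^i_\sigma$ for each $i$ and each $\sigma$, which is exactly the assertion that the collection $\Psi_\sigma=\{\Psi^i_\sigma\}_i$ is a morphism of complexes $\FF^\bullet\to\sigma^\ast\FF^\bullet$. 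The degreewise cocycle identities on the $\Psi^i_\sigma$ assemble into the cocycle relation for $\Psi_\sigma$ on the complex $\FF^\bullet$, and morphisms match up by the same unpacking. This identifies the category of bounded complexes in $\mathcal{C}_{X,Y}$ with $\mathcal{E}_{X,Y}$.

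Composing the two equivalences yields the claimed equivalence $\mathcal{E}_X\simeq\mathcal{E}_{X,Y}$, and by construction the composite is naturally isomorphic to $Q\mapsto\pi^\ast Q$ (applied termwise, with the induced descent data in each degree coming from the natural transformations $\pi^\ast\to\sigma^\ast\pi^\ast$, precisely as in the proof of Proposition \ref{eqcat2}). The substantive content has already been extracted in Proposition \ref{eqcat2}; no genuine obstacle remains, only the routine bookkeeping to check that the termwise application really defines a functor between the complex categories and that the quasi-inverse transports the complex structure correctly.
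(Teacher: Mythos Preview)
Your proposal is correct and follows essentially the same approach as the paper: the paper simply asserts the obvious equivalence between bounded complexes in $\mathcal{C}_{X,Y}$ and $\mathcal{E}_{X,Y}$, and then states that Proposition~\ref{eqcat2} yields the result. You have spelled out exactly these two steps in more detail, so there is nothing to add.
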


By taking projective limits, the same statement is true if we replace $A$ with the full ring of integers ${\mathfrak o}_\lambda$ of $E_\lambda$ and, by tensoring with $\mathbb Q$, with $E_\lambda$ itself.


\begin{defn}
 Let $P\in{\mathcal E}_Y$ (where $A=E_\lambda$, ${\mathfrak o}_\lambda$ or ${\mathfrak o}_{\lambda,n}$ for some $n\geq 1$), the {\bf tensor direct image} of $P$ is the object $\pi_{\otimes\ast}P\in {\mathcal E}_X$ corresponding, under the previous equivalence of categories, to the object
 $$
 \bigotimes_{\tau\in G} \tau^\ast P\in\mathcal E_Y
 $$
 and the natural commutation isomorphisms
 $$
 \Psi_\sigma:\bigotimes_{\tau\in G} \tau^\ast P\to\bigotimes_{\tau\in G} (\tau\sigma)^\ast P\cong\bigotimes_{\tau\in G} \sigma^\ast\tau^\ast P\cong\sigma^\ast\left(\bigotimes_{\tau\in G} \tau^\ast P\right).
 $$
\end{defn}

If $P=\FF[0]$ for a single sheaf $\FF$, then $\pi_{\otimes\ast}P=\pi_{\otimes\ast}\FF[0]$ by construction. The basic properties stated in Proposition \ref{properties} are still valid for complexes, with a similar proof.

If $f:P\to P'$ is a quasi-isomorphism in ${\mathcal E}_{X}$, then so is the induced morphism $\pi_{\otimes\ast}f:\pi_{\otimes\ast}P\to\pi_{\otimes\ast}P'$, so we can unambiguously speak about the tensor direct image of an object of the derived category $D_{ctf}^b(Y,A)$ of $ctf$-complexes of sheaves of $A$-modules on $X$ (the full subcategory of the derived category consisting of objects which are quasi-isomorphic to a bounded complex of $A$-flat sheaves \cite[II.5]{kiehl-weissauer}) as an object of $D_{ctf}^b(X,A)$.

Let us now focus on the setup where $X$ is an algebraic variety over the finite field $k$, $k_r$ is a finite extension of $k$ of degree $r$ and $\pi:Y=X\times\Spec\; k_r\to X$ is the projection. Let $F_x\in\pi_1(X,\bar x)$ be a geometric Frobenius element at $x$, then $F_x^r\in\pi_1(Y,\bar x)$ is a geometric Frobenius element of $Y$ at $\bar x_r$. Given an element $P^\bullet\in D^b_{ctf}(Y,A)$, there is a well-defined value of $\Tr(F_x|P^\bullet_{\bar x})$, given as the alternating sum $\sum_i(-1)^i\Tr(F_x|P^i_{\bar x})$ if $P^\bullet$ is a $ctf$ complex (which is well defined since it is invariant under quasi-isomorphism). Then we have the following result similar to Proposition \ref{frobenius}:

\begin{proposition}\label{tracetensorind}
 For any $P^\bullet\in\Dbc(Y,A)$ we have
 $$
 \Tr(F_x|(\pi_{\otimes\ast}P^\bullet)_{\bar x})=\Tr(F_x^r|P^\bullet_{\bar x_r})
 $$
\end{proposition}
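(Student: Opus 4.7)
The plan is to reduce to the zero-dimensional case by base change and then compute the trace directly on a $ctf$ representative, the crucial point being to bookkeep the Koszul signs introduced by permuting tensor factors.

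First I would invoke the base-change compatibility of $\pi_{\otimes\ast}$ for complexes (the derived analog of Proposition \ref{properties}(3), asserted right after Proposition \ref{eqcat3} to hold with essentially the same proof). Applied to the cartesian square used in the proof of Proposition \ref{frobenius}, this yields an isomorphism $(\pi_{\otimes\ast}P^\bullet)_{\bar x} \cong \tilde\pi_{\otimes\ast}(P^\bullet_{\bar x_r})$ in $\Dbc(\Spec\;k, A)$. So one may assume $X = \Spec\;k$, $Y = \Spec\;k_r$, and work with a bounded complex $M^\bullet$ of finitely generated flat $A$-modules equipped with a continuous $\Gal(\bar k/k_r)$-action, representing $P^\bullet_{\bar x_r}$. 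It then suffices to show $\Tr(F_x \mid \pi_{\otimes\ast}M^\bullet) = \sum_i (-1)^i \Tr(F_x^r \mid M^i)$.

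Next I would unwind the construction. The $n$-th term of $\pi_{\otimes\ast}M^\bullet$ is $\bigoplus_{i_0+\cdots+i_{r-1}=n} M^{i_0}\otimes\cdots\otimes M^{i_{r-1}}$, and by the explicit tensor induction formula of Section 2 together with the easy computation that the only nontrivial cocycle is $h_{\sigma^{r-1},\sigma} = F_x^r$, a geometric Frobenius $F_x$ sends a pure tensor $m_0\otimes\cdots\otimes m_{r-1}$ to the Koszul-signed cyclic shift $m_1\otimes\cdots\otimes m_{r-1}\otimes (F_x^r m_0)$. Since such a shift changes the multi-index $(i_0,\ldots,i_{r-1})$ into its cyclic rotate, only the diagonal summands with all $i_j$ equal to a common value $i$ (so $n=ri$) can contribute to $\Tr(F_x\mid\pi_{\otimes\ast}M^\bullet)$.

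On such a diagonal summand $(M^i)^{\otimes r}$, a direct computation against a basis of $M^i$ gives $(-1)^{i(r-1)}\Tr(F_x^r\mid M^i)$ for the trace of the shift-and-apply-$F_x^r$ operator; the sign is the Koszul sign of a cyclic permutation of $r$ elements of degree $i$, built from $r-1$ adjacent transpositions each of sign $(-1)^{i^2}=(-1)^i$. Combining this with the alternating-sum sign $(-1)^n = (-1)^{ri}$ and using $ri + i(r-1) = i(2r-1) \equiv i \pmod 2$ yields
$$\Tr(F_x\mid(\pi_{\otimes\ast}P^\bullet)_{\bar x}) = \sum_i (-1)^{ri+i(r-1)}\Tr(F_x^r\mid M^i) = \sum_i (-1)^i\Tr(F_x^r\mid M^i) = \Tr(F_x^r\mid P^\bullet_{\bar x_r}).$$
The main subtle point is precisely this sign analysis: one must verify that the Koszul permutation sign combines with the alternating-sum sign to give exactly $(-1)^i$ regardless of the parity of $r$. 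Beyond that, the argument is essentially a derived-category upgrade of Proposition \ref{frobenius}.
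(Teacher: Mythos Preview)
Your proposal is correct and follows essentially the same route as the paper: reduce by base change to $X=\Spec\;k$, represent $P^\bullet$ by a bounded complex of flat $A$-modules, write out the action of $F_x$ on the tensor-product complex as a Koszul-signed cyclic shift with $F_x^r$ applied to one factor, observe that only the diagonal summands $i_0=\cdots=i_{r-1}=i$ contribute to the trace, and verify the sign identity $(-1)^{ri}(-1)^{i(r-1)}=(-1)^i$. The paper additionally makes explicit the preliminary reduction to torsion coefficients $A=\mathfrak{o}_{\lambda,n}$ before choosing a $ctf$ representative, which you leave implicit; otherwise the arguments coincide.
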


\begin{proof}
 By the usual passage to the limit and torsion killing process, it suffices to prove it when $A={\mathfrak o}_{\lambda,n}$. By base change, we may assume that $X=\Spec\;k$ and $Y=\Spec\;k_r$. In that case we can view $P^\bullet$ as a bounded complex of representations of $\Gal(\bar k/k_r)$ on flat (or, equivalently, free, see \cite[5.1]{kiehl-weissauer}) $A$-modules. Since $\{F_x^i\}_{i=0}^{r-1}$ is a set of representatives of the cosets of $\Gal(\bar k/k_r)$ in $\Gal(\bar k/k)$, $\pi_{\otimes\ast}P^\bullet$, as a representation of $\Gal(\bar k/k_r)$, is given by
 $$
 Q^\bullet=P^\bullet\otimes F_x^\ast P^\bullet\otimes \cdots \otimes F_x^{(r-1)\ast}P^\bullet
 $$
 with the isomorphism
 $$
 \Psi_{F_x}:Q^\bullet=P^\bullet\otimes F_x^\ast P^\bullet\otimes \cdots \otimes F_x^{(r-1)\ast}P^\bullet\to F_x^\ast Q^\bullet= F_x^\ast P^\bullet\otimes F_x^{2\ast} P^\bullet\otimes \cdots \otimes F_x^{r\ast}P^\bullet
 $$
 given by the commutation map $P^\bullet\otimes F_x^\ast P^\bullet\otimes \cdots \otimes F_x^{(r-1)\ast}P^\bullet\to F_x^\ast P^\bullet\otimes F_x^{2\ast} P^\bullet\otimes \cdots \otimes P^\bullet$ followed by the isomorphism induced by the action of $F_x^r: P^\bullet\to F_x^{r\ast}P^\bullet$.
 
 Let $m\in{\mathbb Z}$; then, as $A$-modules,
 $$Q^m=\bigoplus_{i_0+\ldots+i_{r-1}=m} P^{i_0}\otimes  P^{i_1}\otimes\cdots\otimes P^{i_{r-1}}
 $$
 with $F_x$ acting on it by
 $$
 m_0\otimes m_1\otimes\cdots\otimes m_{r-1}\in P^{i_0}\otimes P^{i_1}\otimes\cdots\otimes  P^{i_{r-1}}\mapsto$$
 $$
 \mapsto (-1)^{i_0(i_1+\cdots+i_{r-1})}m_1\otimes m_2\otimes\cdots\otimes m_{r-1}\otimes F_x^r(m_0)\in  P^{i_1}\otimes  P^{i_2}\otimes\cdots\otimes P^{i_0}.
 $$
  In particular, if we fix bases ${\mathcal B}_j$ of $P^{i_j}$ and $m_j\in{\mathcal B}_j$ for every $j=0,\ldots,r-1$, then in the expression of $F_x\cdot(m_0\otimes m_1\otimes\cdots\otimes m_{r-1})$ with respect to the tensor product of the ${\mathcal B}_j$'s, the element $m_0\otimes m_1\otimes\cdots\otimes m_{r-1}$ appears with non-zero coefficient if and only if $m_j=m_{j+1}$ for all $j=0,\ldots,r-2$ (so, in particular, $i_j=i_{j+1}$ and $m=i_0r$) and $m_0$ appears with non-zero coefficient in the expansion of $F_x^r\cdot m_0$ with respect to ${\mathcal B}_0$. Additionally, in that case, these two non-zero coefficients concide up to multiplication by $(-1)^{i_0^2(r-1)}=(-1)^{i_0(r-1)}$. Putting all this together, we get:
 $$
 \Tr(F_x|Q^m)=\left\{\begin{array}{ll}
                      0 & \mbox{if }r\not|m \\
                      (-1)^{i(r-1)}\Tr(F_x^r|P^i) & \mbox{if }m=ir
                     \end{array}\right.
 $$
 so
 $$
 \Tr(F_x|\pi_{\otimes\ast}P^\bullet)=\sum_m(-1)^m\Tr(F_x|Q^m)=
 $$
 $$
 =\sum_i(-1)^{ir}(-1)^{i(r-1)}\Tr(F_x^r|P^i)=\sum_i(-1)^i\Tr(F_x^r|P^i)=\Tr(F_x^r|P^\bullet)
 $$
\end{proof}

\section{Convolution direct image of perverse sheaves}

Let $X$, $Y$, $\ell$ and $A$ be as in the previous section. Let ${\mathcal D}_X=D_{ctf}^b(X,A)$ (respectively ${\mathcal D}_Y=D_{ctf}^b(Y,A)$) be the derived category of $ctf$-complexes of sheaves of $A$-modules on $X$ (resp. on $Y$). Let $\mathcal{D}_{X,Y}$ be the following category: the objects of $\mathcal{D}_{X,Y}$ consist of an object $P\in{\mathcal D}_Y$ together with isomorphisms $\Psi_\sigma:P\to\sigma^\ast P$ for every $\sigma\in G$ satisfying the cocycle condition
$$
(\tau^\ast\Psi_\sigma)\circ\Psi_\tau=\Phi_{\sigma,\tau}\circ\Psi_{\sigma\tau}:P\to\tau^\ast\sigma^\ast P
$$
for every $\sigma,\tau\in G$, where $\Phi_{\sigma,\tau}:(\sigma\tau)^\ast P\to\tau^\ast\sigma^\ast P$ is the isomorphism defined by the natural transformation $(\sigma\tau)^\ast\to\tau^\ast\sigma^\ast$. A morphism between two objects $(P,\{\Psi_\sigma\}_{\sigma\in G})$ and $(P',\{\Psi'_\sigma\}_{\sigma\in G})$ is a morphism $\rho:P \to P'$ such that $\Psi'_\sigma\circ\rho=(\sigma^\ast\rho)\circ\Psi_\sigma$ for every $\sigma\in G$.

We have a natural functor $\mathcal D_X\to\mathcal D_{X,Y}$ given by $Q\mapsto \pi^\ast Q$. Since the morphisms in the derived category are defined up to homotopy, we can not expect this to be an equivalence of categories. However, let $\mathcal Perv_{X,Y}$ denote the full subcategory of $\mathcal D_{X,Y}$ consisting of objects $(P,\{\Psi_\sigma\}_{\sigma\in G})$ such that $P$ is perverse, and $\mathcal Perv_X$ the full subcategory of perverse objects in $\mathcal D_X$. Since $\pi^\ast$ is exact for the perverse $t$-structure, we have a functor $F:\mathcal Perv_X\to\mathcal Perv_{X,Y}$ given by $Q\mapsto \pi^\ast Q$.

\begin{proposition}\label{equivperv} The functor $F:\mathcal Perv_X\to\mathcal Perv_{X,Y}$ is an equivalence of categories. 
\end{proposition}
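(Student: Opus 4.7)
The plan is to parallel the proof of Proposition \ref{eqcat2}, now working in the abelian category of perverse sheaves rather than of constructible sheaves. Since $\pi:Y\to X$ is finite étale, $\pi^*$ is $t$-exact for the perverse $t$-structure (no shift is required) and $\pi_*=\pi_!$ is a $t$-exact left-and-right adjoint of $\pi^*$; these two properties will do most of the work.

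To see that $F$ is fully faithful, let $Q,Q'\in\mathcal Perv_X$ and consider a morphism $\phi:\pi^*Q\to\pi^*Q'$ in $\mathcal Perv_Y$ commuting with the canonical descent isomorphisms $\pi^*Q\cong\sigma^*\pi^*Q$. By adjunction $\phi$ corresponds to a morphism $\phi^\flat:Q\to\pi_*\pi^*Q'$, and the compatibility with the descent data translates into $\phi^\flat$ factoring through the $G$-invariants $(\pi_*\pi^*Q')^G$. It thus suffices to identify the unit $Q'\to\pi_*\pi^*Q'$ as inducing an isomorphism $Q'\cong(\pi_*\pi^*Q')^G$. Pulling back to $Y$, proper base change along the Galois decomposition $Y\times_X Y\cong\coprod_{\sigma\in G}Y_\sigma$ gives $\pi^*\pi_*\pi^*Q'\cong\bigoplus_{\sigma\in G}\sigma^*\pi^*Q'$ (and $\sigma^*\pi^*Q'\cong\pi^*Q'$ via the canonical descent isomorphism), whose $G$-invariants recover $\pi^*Q'$. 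One concludes by using that $\pi^*$ is conservative on perverse sheaves, since $\pi$ is surjective étale.

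For essential surjectivity, given $(P,\{\Psi_\sigma\})\in\mathcal Perv_{X,Y}$, the manipulations used in the proof of Proposition \ref{eqcat2} --- recognizing that under $Y\times_X Y\cong\coprod_\sigma Y_\sigma$ the family $\{\Psi_\sigma\}_\sigma$ assembles into a single isomorphism $\Psi:p_1^*P\to p_2^*P$ on $Y\times_X Y$, and that the cocycle condition in $\mathcal D_{X,Y}$ becomes the standard descent cocycle on $Y\times_X Y\times_X Y$ --- reduce the problem to effectivity of étale descent for perverse sheaves along $\pi:Y\to X$. This produces $Q\in\mathcal Perv_X$ with $\pi^*Q\cong P$ compatibly with all the $\Psi_\sigma$, completing the proof.

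The main obstacle is the effectivity of étale descent within the abelian subcategory of perverse sheaves: the constructible-sheaf argument of Proposition \ref{eqcat2} relied on classical étale descent for sheaves of $A$-modules, but here we additionally need that perverse sheaves form a stack for the étale topology, i.e.\ that the perverse $t$-structure is itself étale-local. This is a standard property of the middle perversity going back to Beilinson--Bernstein--Deligne; once it is invoked, the remainder of the argument is a formal transcription of Proposition \ref{eqcat2} with \emph{sheaf} replaced by \emph{perverse sheaf}.
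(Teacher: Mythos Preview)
Your proposal is correct and rests on the same underlying fact as the paper: perverse sheaves form a stack for the étale topology, which is the content of \cite[3.2.2, 3.2.4]{beilinson1982faisceaux}. For essential surjectivity you invoke precisely this, as does the paper.

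The only genuine difference is in the presentation of full faithfulness. The paper simply observes that $\mathcal{E}xt^i(P,Q)=0$ for $i<0$ when $P,Q$ are perverse, so by \cite[3.2.2]{beilinson1982faisceaux} the presheaf $U\mapsto\Hom_{D^b_c(U)}(P|_U,Q|_U)$ is an étale sheaf; applying this to the cover $\{(Y,\pi\sigma)\}_{\sigma\in G}$ gives full faithfulness in one line. Your argument via the adjunction $\pi^*\dashv\pi_*$ and the identification $Q'\cong(\pi_*\pi^*Q')^G$ is a hands-on unwinding of exactly this sheaf condition in the Galois case: the equalizer diagram for the sheaf $\Hom$ over $Y\rightrightarrows Y\times_X Y\cong\coprod_\sigma Y_\sigma$ is the statement that the $G$-equivariant morphisms $\pi^*Q\to\pi^*Q'$ are those descending to $X$. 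Your route is more self-contained and makes the role of $\pi_*$ and the $G$-action explicit; the paper's route is shorter but hides the mechanism inside the BBD citation. Either way, the substantive input is the same.
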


\begin{proof}
 Let $P,Q\in\mathcal Perv_X$. For every \'etale map $\varpi:U\to X$, $\varpi^\ast P$ and $\varpi^\ast Q$ are perverse objects on $U$, and in particular $\mathrm{Ext}^i_U(\varpi^\ast P,\varpi^\ast Q)=\mathrm{Hom}_U(\varpi^\ast P,\varpi^\ast Q[i])=0$ for every $i<0$. That is, the sheaf $\mathcal Ext^i(P,Q)$ vanishes on the \'etale site over $X$ for $i<0$.
 
 By \cite[Proposition 3.2.2]{beilinson1982faisceaux}, $U\mapsto\mathrm{Hom}_{D^b_c(U)}(\varpi^\ast P,\varpi^\ast Q)$ is a sheaf. In particular, applying it to the \'etale cover of $X$ given by $\{(Y,\pi\sigma)\}_{\sigma\in G}$ shows that $F$ is fully faithful.
 
 Additionally, taking $P=Q$ and applying \cite[Th\'eor\`eme 3.2.4]{beilinson1982faisceaux} to the same \'etale cover, we conclude that $F$ is essentially surjective.
\end{proof}

From now on we assume that $Y$ and $X$ are commutative group schemes of dimension $d$ over a base field $k$, and $\pi:Y\to X$ a finite Galois étale map as before. We have an exact convolution bi-functor $-\ast_!-$ from $\Dbc(Y,A)\times\Dbc(Y,A)$ to $\Dbc(Y,A)$ given by
$$
K\ast_! L=\R\mu_!(K\boxtimes L)[d]
$$
where $\mu:Y\times_k Y\to Y$ is the multiplication map. See \cite[8.1]{katz1990esa} for an exhaustive account of its basic properties. For our purposes, the most important ones are the existence of natural commutativity
$$
K\ast_!L\to L\ast_!K
$$
and associativity
$$
(K\ast_!L)\ast_!M\to K\ast_!(L\ast_!M)
$$
isomorphisms from which one can construct natural multi-commutativity isomorphisms
$$
K_1\ast_!K_2\ast_!\cdots\ast_!K_r\to K_{\sigma(1)}\ast_!K_{\sigma(2)}\ast_!\cdots\ast_!K_{\sigma(r)}
$$
for every $\sigma\in{\mathfrak S}_r$.

Let $P\in\mathcal Perv_Y$. $P$ is said to have \emph{property $\mathcal P_!$} \cite[2.6]{katz1996rls} if $P\ast_!Q\in\mathcal Perv_Y$ for every $Q\in\mathcal Perv_Y$. Suppose that $P$ has property $\mathcal P_!$. Given the commutative and associative properties of the convolution, there is a well defined (up to natural isomorphism) object 
$$
Q=\mathlarger{\circledast}_{\tau\in G} \tau^\ast P\in \mathcal Perv_Y
$$
with natural isomorphisms $Q\to\sigma^\ast Q$ for every $\sigma\in G$
$$
\Psi_\sigma:Q=\mathlarger{\circledast}_{\tau\in G} \tau^\ast P\to\mathlarger{\circledast}_{\tau\in G} (\tau\sigma)^\ast P\cong
$$
$$
\cong\mathlarger{\circledast}_{\tau\in G} \sigma^\ast\tau^\ast P\cong\sigma^\ast\mathlarger{\circledast}_{\tau\in G} \tau^\ast P=\sigma^\ast Q
$$
given by the natural commutation isomorphism and the natural transformation $(\tau\sigma)^\ast\to\sigma^\ast\tau^\ast$, which satisfy the cocycle condition.

\begin{defn}
 Let $P\in\mathcal Perv_Y$ with property $\mathcal P_!$, the {\bf convolution direct image} of $P$ is the element $\pi_{cv\ast}P\in\mathcal Perv_X$ corresponding to the object
 $$
 Q=\mathlarger{\circledast}_{\tau\in G} \tau^\ast P\in \mathcal Perv_Y
 $$
 and the isomorphisms $\Psi_\sigma$ according to the equivalence of categories given in Proposition \ref{equivperv}.
 \end{defn}
 
 \begin{lemma}
  Let $P\in\mathcal Perv_Y$ be a perverse object with property $\mathcal P_!$. Then $\HHH^i_c(\bar Y,P)=0$ for $i\neq 0$.
 \end{lemma}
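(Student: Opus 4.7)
\emph{Sketch of plan.} The strategy is to use property $\mathcal P_!$ to iterate the convolution of $P$ with itself, combine the K\"unneth formula with the cohomological-amplitude bound for perverse sheaves, and let the iteration parameter $n$ go to infinity to force $\HHH^i_c(\bar Y, P)$ to concentrate in a single cohomological degree.

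Set $V^i := \HHH^i_c(\bar Y, P)$. By $\mathcal P_!$ and associativity of $\ast_!$, the $n$-fold self-convolution $P^{\ast n} := P \ast_! \cdots \ast_! P$ is perverse for every $n \geq 1$. Writing $\mu_n : Y^n \to Y$ for iterated multiplication, an induction using the projection formula yields
$$P^{\ast n} \cong \R\mu_{n,!}(P^{\boxtimes n})[(n-1)d].$$
Pushing to a point and applying K\"unneth for $\R_!$ then gives
$$\R\Gamma_c(\bar Y, P^{\ast n}) \cong (\R\Gamma_c(\bar Y, P))^{\otimes n}[(n-1)d],$$
so that
$$\HHH^j_c(\bar Y, P^{\ast n}) = \bigoplus_{i_1+\cdots+i_n = j+(n-1)d} V^{i_1} \otimes \cdots \otimes V^{i_n}.$$

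Now $P^{\ast n}$ is perverse on the $d$-dimensional variety $\bar Y$, so the standard support bounds on its perverse cohomology sheaves, combined with Grothendieck's vanishing, force $\HHH^j_c(\bar Y, P^{\ast n})$ to vanish outside a cohomological window whose length does \emph{not} grow with $n$. If $V^{i_0} \neq 0$ for some $i_0 \neq 0$, specializing $i_1 = \cdots = i_n = i_0$ in the K\"unneth decomposition exhibits the nonzero summand $(V^{i_0})^{\otimes n}$ in $\HHH^{ni_0 - (n-1)d}_c(\bar Y, P^{\ast n})$. Its cohomological degree is linear in $n$ with nonzero slope except for the distinguished value of $i_0$ singled out by the shift convention, so for $n$ large the degree escapes the fixed window, contradicting the vanishing above. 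Hence $V^{i_0} = 0$ for all $i_0 \neq 0$.

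The main obstacle will not be the argument itself but rather the careful bookkeeping of the shift $[(n-1)d]$ against the paper's normalization of the perverse $t$-structure, to confirm that the unique cohomological degree that survives the $n \to \infty$ limit is precisely $i_0 = 0$. Once that alignment is settled, the proof reduces to the three formal ingredients above: iterate the convolution using $\mathcal P_!$, apply K\"unneth, and invoke the perverse amplitude bound.
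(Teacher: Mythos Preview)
Your iterative approach is sound in outline but far more elaborate than the paper's. The paper convolves once, not $n$ times: taking $Q=A[d]$ (the constant perverse sheaf on the smooth $d$-dimensional $Y$), property $\mathcal P_!$ makes $P\ast_!Q$ perverse; but $P\ast_!A[d]$ is geometrically constant with stalk $\R\Gamma_c(\bar Y,P)$ up to a shift, and a constant object on a smooth connected variety is perverse only when it is a single vector space placed in degree $-d$. One well-chosen convolution pins down $\R\Gamma_c(\bar Y,P)$ immediately, whereas you replace that single test object by iterated self-convolution and an $n\to\infty$ limit.

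There is also a gap that is more than the ``bookkeeping'' you flag. With your shift, $(V^{i_0})^{\otimes n}$ lands in degree $ni_0-(n-1)d$, whose slope in $n$ vanishes at $i_0=d$, not $i_0=0$; taken at face value your argument proves $\HHH^i_c(\bar Y,P)=0$ for $i\neq d$, which is the wrong statement. The paper's own K\"unneth identity $\HHH^0_c(\bar Y,\circledast_\tau\tau^\ast P)\cong\bigotimes_\tau\HHH^0_c(\bar Y,\tau^\ast P)$, and the computation $P\ast_!A[d]=\R\Gamma_c(\bar Y,P)[d]$ in the proof of this very lemma, are both consistent only with the \emph{unshifted} convolution $K\ast_!L=\R\mu_!(K\boxtimes L)$; under that normalization $\R\Gamma_c(\bar Y,P^{\ast n})\cong\R\Gamma_c(\bar Y,P)^{\otimes n}$ with no extra shift, and your limiting argument then correctly isolates $i_0=0$. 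You need to resolve this normalization explicitly rather than defer it.
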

 
 \begin{proof}
  Let $Q$ be the shifted constant object $A[d]$. Since $Y$ is smooth, $Q$ is a perverse sheaf. Therefore $P\ast_!Q$ is perverse. But $P\ast_!Q$ is the geometrically constant object $P\ast_!A[d]=\R\Gamma_c(Y,P)[d]$, which can only be perverse if it is concentrated in degree $-d$, that is, if $\HHH^i_c(\bar Y,P)=0$ for $i\neq 0$.
 \end{proof}

 If $k\hookrightarrow L$ is a finite Galois extension of fields and $X$ is a commutative group scheme over $\Spec\;k$, then $Y:=X\otimes\Spec\;L$ is a finite Galois étale cover of $X$ via the natural map $\pi:Y\to X$ induced by the projection $\varpi:\Spec\;L\to\Spec\;k$. Given an object $P\in\mathcal Perv_Y$, its cohomology with compact supports $\R\Gamma_c(\bar Y,P)$ with its $\Gal(k^{sep}/L)$-action (where $\bar Y=Y\otimes\Spec\;k^{sep}$) is concentrated in degree $0$ by the previous lemma. Then we have
 
 \begin{proposition}\label{convtensor}
  For every $P\in \mathcal Perv_Y$ we have an isomrphism
  $$
  \HHH^0_c(\bar X,\pi_{cv\ast}P)\cong\varpi_{\otimes\ast}\HHH^0_c(\bar Y,P)
  $$
  in $\mathcal Sh(\Spec\;k,A)$.
 \end{proposition}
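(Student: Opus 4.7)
The plan is to pull both sides of the claimed isomorphism back to $\Spec\;L$ via $\varpi$, identify the resulting $G$-equivariant sheaves, and invoke Proposition \ref{eqcat2} applied to the Galois cover $\varpi:\Spec\;L\to\Spec\;k$ to descend the identification. Since $\bar Y=Y\otimes_L k^{sep}=X\otimes_k k^{sep}=\bar X$ canonically, the map $\pi$ becomes an isomorphism after base change to $\Spec\;k^{sep}$ and so
$$\varpi^\ast\HHH^0_c(\bar X,\pi_{cv\ast}P)\cong\HHH^0_c(\bar Y,\pi^\ast\pi_{cv\ast}P)$$
as $\Gal(k^{sep}/L)$-modules.

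By construction and Proposition \ref{equivperv}, $\pi^\ast\pi_{cv\ast}P=Q:=\mathlarger{\circledast}_{\tau\in G}\tau^\ast P$ equipped with the permutation isomorphisms $\Psi_\sigma:Q\to\sigma^\ast Q$. The iterated Künneth formula for convolution (obtained by applying proper base change to the multiplication map $\mu:Y\times_L Y\to Y$) gives
$$\R\Gamma_c(\bar Y,Q)\cong\bigotimes_{\tau\in G}^L\R\Gamma_c(\bar Y,\tau^\ast P).$$
Because each $\tau$ is a $\Spec\;k$-automorphism of the group scheme $Y$, $\tau^\ast P$ is again perverse with property $\mathcal P_!$, so by the preceding lemma each tensor factor lies in degree $0$; hence $\R\Gamma_c(\bar Y,Q)$ is concentrated in degree $0$ as well and
$$\HHH^0_c(\bar Y,Q)\cong\bigotimes_{\tau\in G}\HHH^0_c(\bar Y,\tau^\ast P)\cong\bigotimes_{\tau\in G}\tau^\ast\HHH^0_c(\bar Y,P)$$
as $\Gal(k^{sep}/L)$-modules.

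Unwinding the definition of the tensor direct image for the cover $\varpi$, one sees that $\varpi^\ast\varpi_{\otimes\ast}\HHH^0_c(\bar Y,P)$ is precisely $\bigotimes_{\tau\in G}\tau^\ast\HHH^0_c(\bar Y,P)$ together with the permutation isomorphisms defining the tensor induction. It then remains to check that the $G$-equivariant structure on $\HHH^0_c(\bar Y,Q)$ induced by $\{\Psi_\sigma\}$ matches the one defining $\varpi_{\otimes\ast}$: both are given by right multiplication by $\sigma$ on the indexing set $G$ of tensor factors, composed with the natural transformations $(\tau\sigma)^\ast\to\sigma^\ast\tau^\ast$. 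Granting this compatibility, Proposition \ref{eqcat2} applied to $\varpi$ descends the identification to the asserted isomorphism in $\mathcal Sh(\Spec\;k,A)$.

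The principal technical obstacle is this last compatibility: one must verify that the iterated Künneth isomorphism intertwines the convolution-commutativity isomorphisms defining $\{\Psi_\sigma\}$ on $\mathlarger{\circledast}_{\tau\in G}\tau^\ast P$ with the analogous permutation of tensor factors arising in the tensor induction. This amounts to a diagram chase based on the naturality of Künneth and the coherence of the symmetric monoidal structures of $\ast_!$ and $\otimes$, but it requires careful bookkeeping of the various commutation and associativity constraints.
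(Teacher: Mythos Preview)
Your proposal is correct and follows essentially the same route as the paper: reduce to comparing objects of $\mathcal C_{\Spec\,k,\Spec\,L}$ by pulling back along $\varpi$, use proper base change to identify $\varpi^\ast\HHH^0_c(\bar X,\pi_{cv\ast}P)$ with $\HHH^0_c(\bar Y,\mathlarger{\circledast}_\tau\tau^\ast P)$, apply the K\"unneth formula \cite[8.1.8]{katz1990esa} to turn the convolution into a tensor product on cohomology, and then match the permutation $G$-structures. The paper dispatches what you flag as the ``principal technical obstacle'' in one line, asserting that the K\"unneth isomorphism is compatible with the $G$-action via permutation of factors and that the relevant square commutes ``by naturality''; your more cautious bookkeeping is reasonable but not strictly needed beyond this.
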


 \begin{proof}
  It suffices to show that their pullbacks are isomorphic on $\mathcal{C}_{\Spec\;k,\Spec\;L}$. The pullback of $\varpi_{\otimes\ast}\HHH^0_c(\bar Y,P)$ is, by definition,
  $$
  Q:=\bigotimes_{\tau\in\Gal(L/k)}\tau^\ast\HHH^0_c(\bar Y,P)
  $$
  with the isomorphisms $\Psi_\sigma:Q\to\sigma^\ast Q$ induced by the commutation isomorphisms of the tensor product. On the left hand side, the pullback is
  $$
  \varpi^\ast\HHH^0_c(\bar X,\pi_{cv\ast}P)
  $$
  with the natural isomorphisms
  $$
  \tilde\Psi_\sigma:\varpi^\ast\HHH^0_c(\bar X,\pi_{cv\ast}P)\to\sigma^\ast \varpi^\ast\HHH^0_c(\bar X,\pi_{cv\ast}P)\cong \varpi^\ast\HHH^0_c(\bar X,\pi_{cv\ast}P).
  $$
  By proper base change, there are natural isomorphisms
  $$
  \varpi^\ast\HHH^0_c(\bar X,\pi_{cv\ast}P)\cong \HHH^0_c(\bar Y,\pi^\ast \pi_{cv\ast}P)=\HHH^0_c(\bar Y,\mathlarger{\circledast}_{\tau\in G} \tau^\ast P)
  $$
  and, by the Künneth formula \cite[8.1.8]{katz1990esa},
  $$
  \HHH^0_c(\bar Y,\mathlarger{\circledast}_{\tau\in G} \tau^\ast P)\cong\bigotimes_{\tau\in G}\HHH^0_c(\bar Y,\tau^\ast P)\cong\bigotimes_{\tau\in G}\tau^\ast\HHH^0_c(\bar Y, P)
  $$
  where this isomorphism is compatible with the action of $G$ on both the convolution and the tensor product via permutation of the factors. Similarly, for every $\sigma\in G$ we have
  $$
  \sigma^\ast \varpi^\ast\HHH^0_c(\bar X,\pi_{cv\ast}P)\cong \bigotimes_{\tau\in G}\sigma^\ast\tau^\ast\HHH^0_c(\bar Y, P)
$$  
and, by naturality, the diagrams
$$
\begin{tikzcd}
\varpi^\ast\HHH^0_c(\bar X,\pi_{cv\ast}P) \arrow[r,"\cong"] \arrow[d,"\tilde\Psi_\sigma"] & \bigotimes_{\tau\in G}\tau^\ast\HHH^0_c(\bar Y,P) \arrow[d,"\Psi_\sigma"] \\
\sigma^\ast\varpi^\ast\HHH^0_c(\bar X,\pi_{cv\ast}P) \arrow[r,"\cong"]  & \bigotimes_{\tau\in G}\sigma^\ast\tau^\ast\HHH^0_c(\bar Y,P)
\end{tikzcd}
$$
commute. In other words, the objects $(Q,\{\Psi_\sigma\})$ and $(\varpi^\ast\HHH^0_c(\bar X,\pi_{cv\ast}P),\{\tilde\Psi_\sigma\})$ in $\mathcal{C}_{\Spec\;k,\Spec\;L}$ are isomorphic.
 \end{proof}

 Suppose now that $Y$ and $X$ are commutative group varieties over a finite field $k$, let $\chi:X(k)\to A^\times$ be a character and $\chi \pi:Y(k)\to A^\times$ its pull-back character on $Y$. Let $\LL_\chi$ and $\LL_{\chi \pi}$ the associated rank $1$ locally constant sheaves on $X$ and $Y$ respectively (cf. \cite[1.4-1.8]{deligne569application}).
 
 \begin{proposition}\label{tensorchi}
  For every $P\in\mathcal Perv_Y$ with property $\mathcal P_!$, there is an isomorphism of objects on $\mathcal Perv_X$:
  $$
  \pi_{cv\ast}(P\otimes\LL_{\chi \pi})\cong (\pi_{cv\ast}P)\otimes\LL_\chi
  $$
   \end{proposition}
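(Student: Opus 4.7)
The plan is to invoke the equivalence of categories of Proposition \ref{equivperv}: since $\pi^\ast:\mathcal Perv_X\to\mathcal Perv_{X,Y}$ is an equivalence, it suffices to exhibit an isomorphism in $\mathcal Perv_{X,Y}$ between the descent objects corresponding to the two sides. On one hand, $\pi_{cv\ast}(P\otimes\LL_{\chi\pi})$ corresponds, by construction, to $\mathlarger{\circledast}_{\tau\in G}\tau^\ast(P\otimes\LL_{\chi\pi})$ equipped with the convolution-permutation isomorphisms $\Psi_\sigma$. On the other, the pullback of $(\pi_{cv\ast}P)\otimes\LL_\chi$ via $\pi^\ast$ is
$$
\pi^\ast(\pi_{cv\ast}P)\otimes\pi^\ast\LL_\chi\cong\Bigl(\mathlarger{\circledast}_{\tau\in G}\tau^\ast P\Bigr)\otimes\LL_{\chi\pi},
$$
with descent isomorphism the convolution-permutation $\Psi_\sigma$ on the first factor tensored with the canonical isomorphism $\LL_{\chi\pi}\cong\sigma^\ast\LL_{\chi\pi}$. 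This latter isomorphism is just the identity, since $\pi\tau=\pi$ forces $\tau^\ast\LL_{\chi\pi}=\tau^\ast\pi^\ast\LL_\chi=\LL_{\chi\pi}$ for every $\tau\in G$.

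The key input is the identity
$$
(K\otimes\LL_{\chi\pi})\ast_!(L\otimes\LL_{\chi\pi})\cong(K\ast_!L)\otimes\LL_{\chi\pi},
$$
valid for any character sheaf on a commutative group scheme. Indeed, the multiplicativity of $\LL_{\chi\pi}$ gives $\mu^\ast\LL_{\chi\pi}\cong\LL_{\chi\pi}\boxtimes\LL_{\chi\pi}$, where $\mu:Y\times_k Y\to Y$ is multiplication, so by the projection formula
$$
(K\otimes\LL_{\chi\pi})\ast_!(L\otimes\LL_{\chi\pi})=\R\mu_!\bigl((K\boxtimes L)\otimes\mu^\ast\LL_{\chi\pi}\bigr)[d]\cong\R\mu_!(K\boxtimes L)[d]\otimes\LL_{\chi\pi}.
$$
Iterating this identity $|G|$ times using the multi-commutativity and associativity constraints of $\ast_!$ produces a canonical isomorphism
$$
\mathlarger{\circledast}_{\tau\in G}(\tau^\ast P\otimes\LL_{\chi\pi})\xrightarrow{\;\sim\;}\Bigl(\mathlarger{\circledast}_{\tau\in G}\tau^\ast P\Bigr)\otimes\LL_{\chi\pi},
$$
and since $\tau^\ast(P\otimes\LL_{\chi\pi})=\tau^\ast P\otimes\LL_{\chi\pi}$ (again using $\pi\tau=\pi$), this gives the desired isomorphism of underlying perverse sheaves.

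What remains, and what will be the main technical obstacle, is checking that this underlying isomorphism is compatible with the descent data on the two sides. The descent isomorphism on the left permutes the entire factors $\tau^\ast P\otimes\LL_{\chi\pi}$, while the one on the right permutes only the $\tau^\ast P$ factors, leaving the single surviving copy of $\LL_{\chi\pi}$ fixed. This compatibility is a diagram chase that follows from naturality of all the ingredients: the multi-commutativity constraints of $\ast_!$, the symmetric monoidal structure of $\otimes$, the projection-formula isomorphism, and the canonical isomorphism $\mu^\ast\LL_{\chi\pi}\cong\LL_{\chi\pi}\boxtimes\LL_{\chi\pi}$ are all natural in their arguments and $G$-equivariant. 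The verification is formal but tedious; conceptually, it amounts to the statement that the projection-formula identity for a character sheaf upgrades to an isomorphism of $G$-equivariant objects, which then descends through the equivalence of Proposition \ref{equivperv} to yield the required isomorphism in $\mathcal Perv_X$.
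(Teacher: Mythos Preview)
Your proposal is correct and follows essentially the same route as the paper: both arguments work in $\mathcal Perv_{X,Y}$ via the equivalence of Proposition~\ref{equivperv}, identify the two pullbacks as $\mathlarger{\circledast}_\tau\tau^\ast(P\otimes\LL_{\chi\pi})$ and $(\mathlarger{\circledast}_\tau\tau^\ast P)\otimes\LL_{\chi\pi}$ with their respective descent data, and then check that the canonical isomorphism between these intertwines the $\Psi_\sigma$'s by naturality of the commutation constraints. The only cosmetic difference is that where you spell out the projection-formula identity $(K\otimes\LL_{\chi\pi})\ast_!(L\otimes\LL_{\chi\pi})\cong(K\ast_!L)\otimes\LL_{\chi\pi}$, the paper simply cites \cite[8.1.10]{katz1990esa} for the fact that tensoring with $\LL_{\chi\pi}$ is an auto-equivalence of $(\Dbc(Y,A),\ast_!)$.
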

   
\begin{proof} We will prove the isomorphism by showing that the pullbacks on $\mathcal Perv_{X,Y}$ are isomorphic. The pullback of $\pi_{cv\ast}(P\otimes\LL_{\chi \pi})$ is
$$
\mathlarger{\circledast}_{\tau\in G}\tau^\ast(P\otimes\LL_{\chi \pi})
$$
with the natural isomorphisms $\Psi_\sigma$. On the other hand, the pullback of $(\pi_{cv\ast}P)\otimes\LL_\chi$ is
$$
\pi^\ast((\pi_{cv\ast}P)\otimes\LL_\chi)=(\pi^\ast \pi_{cv\ast}P)\otimes \LL_{\chi \pi}=(\mathlarger{\circledast}_{\tau\in G}\tau^\ast P)\otimes\LL_{\chi \pi}
$$
with the isomorphisms
$$
\tilde\Psi_\sigma:(\mathlarger{\circledast}_{\tau\in G}\tau^\ast P)\otimes\LL_{\chi \pi}\to \sigma^\ast(\mathlarger{\circledast}_{\tau\in G}\tau^\ast P)\otimes\sigma^\ast\LL_{\chi \pi}
$$
induced by the commutation isomorphism $\mathlarger{\circledast}_{\tau\in G}\tau^\ast P\to\sigma^\ast\mathlarger{\circledast}_{\tau\in G}\tau^\ast P$ and the natural isomorphism $\LL_{\chi \pi}\to\sigma^\ast\LL_{\chi \pi}$. Since tensoring with $\LL_{\chi \pi}$ is an auto-equivalence of the tensor category $\Dbc(Y,A)$ (with the convolution operation) by \cite[8.1.10]{katz1990esa} and the commutation isomorphisms are natural, we conclude that the diagrams
$$
\begin{tikzcd}
 \mathlarger{\circledast}_{\tau\in G}\tau^\ast(P\otimes\LL_{\chi \pi}) \arrow[d,"\Psi_\sigma"] \arrow[r,"\cong"] & (\mathlarger{\circledast}_{\tau\in G}\tau^\ast P)\otimes\LL_{\chi \pi} \arrow[d,"\tilde\Psi_\sigma"] \\
 \mathlarger{\circledast}_{\tau\in G}\sigma^\ast\tau^\ast(P\otimes\LL_{\chi \pi})
\arrow[r,"\cong"] & (\mathlarger{\circledast}_{\tau\in G}\sigma^\ast\tau^\ast P)\otimes\sigma^\ast\LL_{\chi \pi}
\end{tikzcd}
$$
are commutative and, in particular, both objects are isomorphic in $\mathcal Perv_{X,Y}$.
\end{proof}

Now let $Y=X\times\Spec\;k_r$ and let $A$ be a finite extension of ${\mathbb Q}_\ell$. For every $y\in X(k_r)=Y(k_r)$, let $\mathrm{N}(y)\in X(k)$ be its norm, defined as $\prod_{\sigma\in G}\sigma(y)$ (which is a point in $X(k_r)$ invariant under $G$, and therefore in $X(k)$)

\begin{proposition}\label{traceformula}
 Under the previos hypothesis, let $P\in\mathcal Perv_Y$ with property $\mathcal P_!$ and let $x\in X(k)$. Then
 $$
 \Tr(F_x|\pi_{cv\ast}P_{\bar x})=\sum_{y\in X(k_r)|\mathrm{N}(y)=x}\Tr(F_y|P_{\bar y})
 $$
\end{proposition}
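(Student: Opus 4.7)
The plan is to reduce the pointwise identity to a global Grothendieck--Lefschetz trace-formula computation, using Lang character sheaves to produce enough linear relations and then Fourier inversion on the finite abelian group $X(k)$ to separate the contributions of individual $x$.

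First I would establish the untwisted global identity
$$
\sum_{x\in X(k)}\Tr(F_x|(\pi_{cv\ast}P)_{\bar x})=\sum_{y\in X(k_r)}\Tr(F_y|P_{\bar y}).
$$
Grothendieck--Lefschetz on $X$ rewrites the left side as $\sum_i(-1)^i\Tr(F|\HHH^i_c(\bar X,\pi_{cv\ast}P))$. The lemma preceding Proposition \ref{convtensor} gives $\HHH^i_c(\bar Y,P)=0$ for $i\neq 0$, and since property $\mathcal P_!$ is preserved by convolution, the same vanishing holds for $\pi^\ast\pi_{cv\ast}P\cong\mathlarger{\circledast}_{\tau\in G}\tau^\ast P$. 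Since $\bar Y\to\bar X$ is a trivial étale cover of degree $r$, so that $\HHH^\ast_c(\bar Y,\pi^\ast K)\cong\HHH^\ast_c(\bar X,K)^{\oplus r}$, this transfers to $\HHH^i_c(\bar X,\pi_{cv\ast}P)=0$ for $i\neq 0$. Proposition \ref{convtensor} then identifies $\HHH^0_c(\bar X,\pi_{cv\ast}P)$ with $\varpi_{\otimes\ast}\HHH^0_c(\bar Y,P)$, where $\varpi:\Spec\;k_r\to\Spec\;k$; Proposition \ref{tracetensorind} turns the Frobenius trace into $\Tr(F^r|\HHH^0_c(\bar Y,P))$; and Grothendieck--Lefschetz on the $k_r$-variety $Y$ rewrites this as $\sum_{y\in X(k_r)}\Tr(F_y|P_{\bar y})$.

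Next I would upgrade this to a character-twisted version. For any character $\chi:X(k)\to A^\times$, enlarging $A$ to $\Ql$ if necessary so that $|X(k)|$ is invertible and characters separate points, let $\LL_\chi$ be the associated rank one Lang sheaf on $X$ and $\LL_{\chi\pi}=\pi^\ast\LL_\chi$ its pullback to $Y$. Since $\LL_{\chi\pi}$ is multiplicative, tensoring with it is an auto-equivalence of the convolution category, so $P\otimes\LL_{\chi\pi}$ still has property $\mathcal P_!$. Applying the untwisted identity to $P\otimes\LL_{\chi\pi}$ together with Proposition \ref{tensorchi} and the standard trace computations $\Tr(F_x|(\LL_\chi)_{\bar x})=\chi(x)$ for $x\in X(k)$ and $\Tr(F_y|(\LL_{\chi\pi})_{\bar y})=\chi(\mathrm N(y))$ for $y\in X(k_r)$ yields
$$
\sum_{x\in X(k)}\chi(x)\Tr(F_x|(\pi_{cv\ast}P)_{\bar x})=\sum_{y\in X(k_r)}\chi(\mathrm N(y))\Tr(F_y|P_{\bar y})=\sum_{x\in X(k)}\chi(x)\sum_{y:\mathrm N(y)=x}\Tr(F_y|P_{\bar y}).
$$
Since this equality of $\chi$-Fourier coefficients holds for every character $\chi$, Fourier inversion on the finite abelian group $X(k)$ gives the desired pointwise identity.

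The main technical point is the cohomology vanishing $\HHH^i_c(\bar X,\pi_{cv\ast}P)=0$ for $i\neq 0$, needed to collapse the alternating trace sum to a single trace on $\HHH^0_c$; this reduces via Künneth and the triviality of the geometric cover $\bar Y\to\bar X$ to the vanishing already supplied on $Y$. Everything else is a routine trace bookkeeping once the machinery of Propositions \ref{convtensor}, \ref{tensorchi}, and \ref{tracetensorind} is in place.
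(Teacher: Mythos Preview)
Your proof is correct and follows essentially the same route as the paper: show that the two functions on $X(k)$ have the same Fourier transform with respect to every character $\chi$, using the Grothendieck trace formula together with Propositions \ref{tracetensorind}, \ref{convtensor}, and \ref{tensorchi}, then conclude by Fourier inversion. The only organizational difference is that you isolate the untwisted case first and then apply it to $P\otimes\LL_{\chi\pi}$, whereas the paper computes the $\chi$-twisted trace directly; these are equivalent. One small correction: since $Y=X\times_{\Spec k}\Spec k_r$, the geometric space $\bar Y=Y\times_{\Spec k_r}\Spec\bar k$ is canonically isomorphic to $\bar X$, not a degree~$r$ cover of it, so the vanishing $\HHH^i_c(\bar X,\pi_{cv\ast}P)=0$ for $i\neq 0$ follows immediately from the vanishing for $\pi^\ast\pi_{cv\ast}P$ on $\bar Y$ without the $\oplus r$ decomposition.
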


\begin{proof}
 For every $x\in X(k)$, let $\alpha(x)=\Tr(F_x|\pi_{cv\ast}P)$ and $\beta(x)=\sum_{y\in X(k_r)|\mathrm{N}(y)=x}\Tr(F_y|P)$. View $\alpha$ and $\beta$ as $A$-valued functions on the finite abelian group $X(k)$. Then $\alpha$ and $\beta$ are equal if and only if their discrete Fourier transforms $\hat \alpha$ and $\hat\beta$ are. For every character $\chi:X(k)\to A^\times$ we have
 $$
 \hat\alpha(\chi)=\sum_{x\in X(k)}\chi(x)\Tr(F_x|\pi_{cv\ast}P)=\sum_{x\in X(k)}\Tr(F_x|(\pi_{cv\ast}P)\otimes\LL_\chi)
 $$
 and
 $$
 \hat\beta(\chi)=\sum_{x\in X(k)}\chi(x)\sum_{y\in X(k_r)|\mathrm{N}(y)=x}\Tr(F_y|P)=
 $$
 $$
 =\sum_{y\in X(k_r)}\chi(\mathrm{N}_{k_r/k}(y))\Tr(F_y,P)=\sum_{y\in X(k_r)}\Tr(F_y,P\otimes\LL_{\chi \pi})
 $$
 If $F$ denotes the geometric Frobenius element of $\Gal(\bar k/k)$ then, by the trace formula, we have
 $$
 \hat\alpha(\chi)=\Tr(F|\R\Gamma_c(X,(\pi_{cv\ast}P)\otimes\LL_\chi))=\Tr(F|\HHH^0_c(X,(\pi_{cv\ast}P)\otimes\LL_\chi))
 $$
 and
 $$
 \hat\beta(\chi)=\Tr(F^r|\R\Gamma_c(Y,P\otimes\LL_{\chi \pi}))=\Tr(F^r|\HHH^0_c(Y,P\otimes\LL_{\chi \pi}))
 $$
 and, by propositions \ref{tracetensorind}, \ref{convtensor} and \ref{tensorchi}: 
 $$
 \hat\alpha(\chi)=\Tr(F|\HHH^0_c(X,(\pi_{cnv\ast}P)\otimes\LL_\chi)=\Tr(F|\HHH^0_c(X,\pi_{cnv\ast}(P\otimes\LL_{\chi \pi}))=
 $$
 $$
 =\Tr(F|\pi_{\otimes\ast}\HHH^0_c(Y,P\otimes\LL_{\chi \pi}))=\Tr(F^r|\HHH^0_c(Y,P\otimes\LL_{\chi \pi}))=\hat\beta(\chi).
 $$
\end{proof}

\section{Applications and examples}

In this section we will describe how the convolution direct image construction can be applied to obtain improved estimates for the number of rational points on some varieties over finite fields or for certain exponential sums. We will give two important examples to illustrate this:  Artin-Schreier curves and superelliptic curves. These examples were worked out directly in \cite{rl2010number}, here we will show how they can be obtained as a simple application of our convolution direct image construction.

In order to use this construction for obtaining estimates for the number of rational points on varieties or for partial exponential sums, we need a way to control the weight of the convolution direct image of an object. Suppose again that $X$ is a commutative group variety over a finite field $k$ and $Y=X\times\Spec\;k_r$. We will fix an embedding $\iota:\Ql\to\CC$ so we can freely spreak about weights of $\ell$-adic objects (by which we will mean $\iota$-weights for the given embedding $\iota$). Let $q=p^n$ denote the cardinality of $q$.

\begin{proposition}\label{weights}
 Let $P\in\mathcal Perv(Y,\Ql)$ be an object mixed of weights $\leq w$ with property $\mathcal P_!$. Then $\pi_{cv\ast}P\in\Dbc(X,\Ql)$ is mixed of weights $\leq rw$.
\end{proposition}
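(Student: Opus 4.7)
The plan is to exploit the construction of $\pi_{cv\ast}P$: by the definition via the equivalence of Proposition~\ref{equivperv}, its pullback along $\pi$ is canonically
$$
\pi^\ast(\pi_{cv\ast}P) \;\cong\; \mathlarger{\circledast}_{\tau \in G} \tau^\ast P \;\in\; \mathcal{Perv}_Y.
$$
So it suffices to prove two things: (a) the $r$-fold $!$-convolution on the right is mixed of weights $\leq rw$ on $Y$, and (b) the weight bound can be transferred back from $Y$ to $X$ via the finite \'etale cover $\pi$.

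For (a), each element $\tau \in G$ acts as a $k$-scheme automorphism of $Y$, so $\tau^\ast P$ is again mixed of weights $\leq w$ (pullback by a $k$-morphism permutes the closed points of a given residue degree and preserves the Frobenius eigenvalues on stalks). I would then invoke the standard weight estimate for the $!$-convolution of mixed perverse sheaves on a commutative group scheme: starting from the fact that external tensor products add weights, using Deligne's main theorem of Weil~II (which says $R\mu_!$ preserves the class of mixed complexes of weights $\leq w$), and absorbing the shift $[d]$ in the definition $K \ast_! L = R\mu_!(K \boxtimes L)[d]$ into the convention on weights, one obtains that $!$-convolving two mixed perverse objects of weights $\leq w_1$ and $\leq w_2$ yields one of weights $\leq w_1 + w_2$. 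Iterating this $r$ times (commutativity and associativity of $\ast_!$ makes the order irrelevant) gives that $\mathlarger{\circledast}_{\tau \in G} \tau^\ast P$ is mixed of weights $\leq rw$.

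For (b), finiteness and \'etaleness of $\pi$ make $\pi^\ast$ both preserve and reflect the property of being mixed of weights $\leq m$: every closed point $x$ of $X$ lifts to some closed point $y$ of $Y$ with residue-field degree $d := [k(y):k(x)]$ dividing $r$, and the geometric Frobenius $F_y$ acts on the stalk $(\pi^\ast K)_{\bar y} = K_{\bar x}$ as $F_x^d$. Hence the Weil-number conditions defining ``mixed of weights $\leq m$'' are equivalent for $K$ on $X$ and for $\pi^\ast K$ on $Y$. Applied to $K = \pi_{cv\ast}P$ with $m = rw$, this yields the conclusion.

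The main technical obstacle I anticipate is a careful bookkeeping of conventions in step (a): one must verify that the notion of ``mixed of weights $\leq w$'' for a perverse sheaf used here is compatible with the cohomological shift $[d]$ in the definition of $\ast_!$ in such a way that the $r$-fold convolution gives the clean bound $rw$, and not $rw$ plus a parasitic term involving $d$ and $r-1$. Once this convention question is pinned down, the rest of the argument is formal.
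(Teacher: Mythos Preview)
Your proposal is correct and follows essentially the same route as the paper: reduce to $Y$ via the fact that mixedness of weights $\leq m$ is detected after a finite extension of the base field (your step (b)), identify $\pi^\ast(\pi_{cv\ast}P)$ with the $r$-fold $!$-convolution $P\ast_! F^\ast P\ast_!\cdots\ast_! F^{(r-1)\ast}P$, and then invoke the standard weight estimate from \cite[5.1.14]{beilinson1982faisceaux} (your step (a)). The paper's proof is two sentences doing exactly this; you have simply unpacked it and justified each step more carefully. Your caution about the shift $[d]$ in the definition of $\ast_!$ is well taken: the paper cites \cite[5.1.14]{beilinson1982faisceaux} without commenting on how the shift interacts with the bound $rw$, so your bookkeeping remark is a genuine (if minor) refinement rather than a deviation.
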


\begin{proof}
 Since being mixed is invariant under finite extension of the base field, it suffices to prove it for $\pi^\ast \pi_{cv\ast}P$. But 
 $$
 \pi^\ast \pi_{cv\ast}P\cong P\ast_! (F^\ast P) \ast_! \cdots (F^{(r-1)\ast}P)
 $$
 so this is a consequence of \cite[5.1.14]{beilinson1982faisceaux}.
\end{proof}

\begin{cor}\label{estimate}
 Suppose that $P$ is mixed of weights $\leq w$. Let $x\in X(k)$, and suppose that ${\mathcal H}^i(P\ast_!(F^\ast P)\ast_!\cdots\ast_!(F^{(r-1)\ast}P))_{\bar x}=0$ for $i>n$. Then
 $$
 \left|\sum_{y\in X(k_r);\mathrm{N}(y)=x}\Tr(F_y|P_{\bar y})\right|\leq C\cdot q^{(rw+n)/2}
 $$
 where $C=\sum_i\dim\;{\mathcal H}^i(P\ast_!(F^\ast P)\ast_!\cdots\ast_!(F^{(r-1)\ast}P))_{\bar x}$.
\end{cor}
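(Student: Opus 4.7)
The plan is to apply the trace formula of Proposition \ref{traceformula} to rewrite the sum as a Frobenius trace on a stalk of $\pi_{cv\ast}P$, then exploit Proposition \ref{weights} together with the standard weight shift for cohomology sheaves of a mixed complex.

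First I would apply Proposition \ref{traceformula} to identify the left-hand side with $|\Tr(F_x|(\pi_{cv\ast}P)_{\bar x})|$. Expanding the trace as the alternating sum over cohomology sheaves and invoking the triangle inequality gives
$$
\bigl|\Tr(F_x|(\pi_{cv\ast}P)_{\bar x})\bigr|\leq\sum_i\bigl|\Tr(F_x|{\mathcal H}^i(\pi_{cv\ast}P)_{\bar x})\bigr|.
$$
By Proposition \ref{weights}, $\pi_{cv\ast}P$ is mixed of weights $\leq rw$, so by the standard convention for weights of complexes (cf.\ \cite[5.1.14]{beilinson1982faisceaux}) each cohomology sheaf ${\mathcal H}^i(\pi_{cv\ast}P)$ is mixed of weights $\leq rw+i$. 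Hence every eigenvalue of $F_x$ on the stalk ${\mathcal H}^i(\pi_{cv\ast}P)_{\bar x}$ has complex absolute value at most $q^{(rw+i)/2}$, and consequently
$$
\bigl|\Tr(F_x|{\mathcal H}^i(\pi_{cv\ast}P)_{\bar x})\bigr|\leq\dim{\mathcal H}^i(\pi_{cv\ast}P)_{\bar x}\cdot q^{(rw+i)/2}.
$$

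To match the definition of $C$ in the statement, I would identify these stalks via pullback. Since $\pi$ is \'etale, $\pi^\ast$ is $t$-exact and commutes with stalks at geometric points, while the convolution analogue of Proposition \ref{properties}(1) gives
$$
\pi^\ast\pi_{cv\ast}P\cong P\ast_!(F^\ast P)\ast_!\cdots\ast_!(F^{(r-1)\ast}P).
$$
Therefore ${\mathcal H}^i(\pi_{cv\ast}P)_{\bar x}$ is isomorphic to ${\mathcal H}^i(P\ast_!(F^\ast P)\ast_!\cdots\ast_!(F^{(r-1)\ast}P))_{\bar x_r}$ for any lift $\bar x_r$ of $\bar x$ to $Y$. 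The hypothesis then kills all summands with $i>n$, and for $i\leq n$ we have $q^{(rw+i)/2}\leq q^{(rw+n)/2}$; summing over $i$ yields the desired bound $C\cdot q^{(rw+n)/2}$. The only subtle point is the $i$-shift in the weight of the cohomology sheaves, but this is part of the standard convention and presents no serious obstacle.
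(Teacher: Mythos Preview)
Your argument is correct and follows essentially the same route as the paper: apply Proposition~\ref{traceformula}, expand the trace as an alternating sum over cohomology sheaves, invoke Proposition~\ref{weights} to get the weight bound $rw+i$ on ${\mathcal H}^i$, and sum. You are slightly more explicit than the paper in spelling out the identification ${\mathcal H}^i(\pi_{cv\ast}P)_{\bar x}\cong{\mathcal H}^i(P\ast_!(F^\ast P)\ast_!\cdots\ast_!(F^{(r-1)\ast}P))_{\bar x_r}$ via $\pi^\ast\pi_{cv\ast}P$, which the paper leaves implicit.
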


\begin{proof}
 By Proposition \ref{traceformula}, we have
 $$
 \left|\sum_{y\in X(k_r)|\mathrm{N}(y)=x}\Tr(F_y|P_{\bar y})\right|=|\Tr(F_x|(\pi_{cv\ast}P)_{\bar x})|=\left|\sum_i(-1)^i\Tr(F_x|({\mathcal H}^i(\pi_{cv\ast}P))_{\bar x})\right|\leq
 $$
 $$
 \leq\sum_i|\Tr(F_x|({\mathcal H}^i(\pi_{cv\ast}P))_{\bar x})|\leq\sum_i(\dim\;{\mathcal H}^i(\pi_{cv\ast}P)_{\bar x})q^{(rw+i)/2}\leq
 $$
 $$
 \leq \left(\sum_i\dim\;{\mathcal H}^i(\pi_{cv\ast}P)_{\bar x}\right)q^{(rw+n)/2}=C\cdot q^{(rw+n)/2}
 $$
 since $\pi_{cv\ast}P$ is mixed of weights $\leq rw$, so ${\mathcal H}^i(\pi_{cv\ast}P)$ is mixed of weights $\leq rw+i$ for every $i$.
 \end{proof}

Let us now specialize to the case $X=\AAA^1_k$.

\begin{proposition}\label{artin-schreier}
 Let $f\in k_r[x]$ be a polynomial of degree $d$ prime to $p$, and let $\FF$ be the $\Ql$-sheaf on $\AAA^1_{k_r}$ given by the kernel of the trace morphism $f_\ast\Ql\to\Ql$. Let $S\subseteq\bar k$ be the set of critical values of $f$, and suppose that $a\notin S+F(S)+\cdots+F^{r-1}(S)$ (where $F$ is the Frobenius automorphism $x\mapsto x^q$ of $\bar k$). Then
 $$
 \left|\sum_{y\in k_r;\Tr(y)=a}\Tr(F_y|{\mathcal F}_{\bar y})\right|\leq (d-1)^r \cdot q^{(r-1)/2}.
 $$
\end{proposition}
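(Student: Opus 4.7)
The strategy is to apply Corollary \ref{estimate} to the perverse sheaf $P := \mathcal F[1]$ on $Y = \mathbb{A}^1_{k_r}$.

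Since $f_\ast\Ql$ is pure of weight $0$ and lisse of rank $d$ on $U := \mathbb{A}^1_{k_r}\setminus S$, the summand $\mathcal F$ is lisse of rank $d-1$ on $U$ and pure of weight $0$, so $P$ is perverse and pure of weight $1$. Because the trace is surjective (so $\mathcal F$ has no constant subquotient) and $\mathcal F$ has no punctual sections, $P$ coincides with the middle extension $j_{!\ast}(\mathcal F|_U[1])$ and satisfies property $\mathcal P_!$, so $\pi_{cv\ast}P$ is defined. Since $\Tr(F_y\mid P_{\bar y}) = -\Tr(F_y\mid\mathcal F_{\bar y})$, Proposition \ref{traceformula} gives
$$
\sum_{y\in k_r,\ \Tr(y)=a}\Tr(F_y\mid\mathcal F_{\bar y}) \;=\; -\Tr\bigl(F_a\mid(\pi_{cv\ast}P)_{\bar a}\bigr),
$$
reducing the problem to bounding the right-hand side via Corollary \ref{estimate} with $w = 1$.

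Consider the pull-back $Q := \pi^\ast\pi_{cv\ast}P \cong P\ast_!(F^\ast P)\ast_!\cdots\ast_!(F^{(r-1)\ast}P)$. Each twist $F^{i\ast}P$ is the perverse middle extension of the lisse sheaf on $\mathbb{A}^1_{k_r}\setminus F^i(S)$ associated to the polynomial $F^i(f)$, and each has property $\mathcal P_!$, so $Q$ is perverse on $\mathbb{A}^1_{k_r}$. In particular $\mathcal H^i(Q) = 0$ for $i > 0$, and $\mathcal H^0(Q)$ is supported on a finite set $T \subset \mathbb{A}^1_{\bar k}$. The core geometric claim is
$$
T \;\subseteq\; S + F(S) + \cdots + F^{r-1}(S),
$$
which I would prove by induction on $r$. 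Writing $Q = Q_{r-1} \ast_! F^{(r-1)\ast}P$ with $Q_{r-1}$ the convolution of the first $r-1$ factors, and using the stalk description
$$
(Q_{r-1}\ast_!F^{(r-1)\ast}P)_{\bar a}\cong R\Gamma_c\bigl(\mathbb{A}^1_{\bar k},\,Q_{r-1}(y)\otimes (F^{(r-1)\ast}P)(a - y)\bigr)[1],
$$
together with the inductive description of the singular locus of $Q_{r-1}$, one sees that the compactly supported cohomology on the fiber has the generic perverse concentration in degree $-1$ whenever the singular set of $Q_{r-1}$ in $y$ is disjoint from the translate $a - \mathrm{Sing}(F^{(r-1)\ast}P)$. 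This disjointness is equivalent to $a \notin (S+\cdots+F^{r-2}(S)) + F^{r-1}(S)$, which is our hypothesis. Consequently $\mathcal H^0(Q)_{\bar a} = 0$ and $Q_{\bar a}$ is concentrated in degree $-1$, so we may take $n = -1$ in Corollary \ref{estimate}.

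Finally, $C = \dim \mathcal H^{-1}(Q)_{\bar a}$ equals the generic rank of the iterated convolution. Each factor $F^{i\ast}\mathcal F$ is tame at $\infty$ (as $d$ is prime to $p$) of generic rank $d-1$, and the multiplicativity of Euler-Poincaré characteristics under $\ast_!$ (see \cite[8.1]{katz1990esa}) yields $C = (d-1)^r$. Substituting $w = 1$, $n = -1$, $C = (d-1)^r$ into Corollary \ref{estimate} gives the claimed bound $(d-1)^r q^{(r-1)/2}$. The principal obstacle is the inductive localization of $T$ and the verification that $Q_{\bar a}$ is concentrated in a single cohomological degree: the hypothesis on $a$ is tailored exactly to this end, but carrying out the induction requires careful control of how critical values accumulate under successive additive convolutions with Frobenius twists.
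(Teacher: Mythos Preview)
Your overall strategy---apply Corollary~\ref{estimate} to $P=\mathcal F[1]$, control where the iterated convolution $Q=P\ast_!(F^\ast P)\ast_!\cdots\ast_!(F^{(r-1)\ast}P)$ can be singular, and then read off the constant $C$---is the same as the paper's. The difference is in the execution: the paper passes through the Fourier transform, which turns the $r$-fold additive convolution into the $r$-fold \emph{tensor product} $\mathcal G_0\otimes\cdots\otimes\mathcal G_{r-1}$ on $\mathbb G_m$. In that picture the singular locus and the rank are read off simultaneously from the local monodromy at $\infty$: each $\mathcal G_i$ is a direct sum of pieces $\mathcal L_{\psi(s\,t)}\otimes(\text{slopes}<1)$ with $s\in F^i(S)$, so the tensor product twisted by $\mathcal L_{\psi(-at)}$ is totally wild at $\infty$ exactly when $a\notin S+F(S)+\cdots+F^{r-1}(S)$, and Ogg--Shafarevic gives rank $(d-1)^r$ directly as the Swan conductor.

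Your direct inductive route can be made to work, but as written it has a genuine gap in the computation of $C$. The sentence ``multiplicativity of Euler--Poincar\'e characteristics under $\ast_!$ yields $C=(d-1)^r$'' is not correct: since $f_\ast\Ql\cong\Ql\oplus\mathcal F$ and $\chi_c(\mathbb A^1,f_\ast\Ql)=\chi_c(\mathbb A^1,\Ql)=1$, one has $\chi_c(\mathbb A^1,\mathcal F)=0$, hence $\chi_c(P)=0$ and multiplicativity gives $\chi_c(Q)=0$, not $(d-1)^r$. The generic rank of $Q$ is not the Euler characteristic. What actually gives $(d-1)^r$ is an inductive drop computation: if $\mathcal H^{-1}(Q_{r-1})$ has generic rank $(d-1)^{r-1}$, is tame at $\infty$, and has $\chi_c=0$ (so total drop $=(d-1)^{r-1}$), then for $a$ outside the sumset the rank of $Q_r$ at $a$ is $-\chi_c\bigl(\mathbb A^1,\mathcal H^{-1}(Q_{r-1})(y)\otimes F^{(r-1)\ast}\mathcal F(a-y)\bigr)=(d-1)\cdot(d-1)^{r-1}+(d-1)^{r-1}\cdot(d-1)-(d-1)^r=(d-1)^r$. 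This is exactly the kind of computation the paper carries out in the $\mathbb G_m$ case (Proposition after Proposition~\ref{artin-schreier}); for $\mathbb A^1$ the paper avoids it via Fourier transform. Your sketch of the singular-locus induction also needs the vanishing of $H^2_c$ on each fiber (no constant quotient of the tensor product), which is not automatic from disjointness of the singular sets; in the paper this is subsumed in the observation that the Fourier-transformed sheaf twisted by $\mathcal L_{\psi(-at)}$ is totally wild at $\infty$. Finally, your justification of property $\mathcal P_!$ should invoke that $\mathcal F$ is tame at $\infty$, so no $\mathcal L_\psi[1]$ with $\psi$ nontrivial can occur as a quotient, in addition to the ``no constant quotient'' argument for $\psi$ trivial.
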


\begin{proof}
 Let $P=\FF[1]\in\Dbc(Y,\Ql)$. Then $P$ is a perverse sheaf, which is lisse on $\AAA^1_{k_r}\backslash S$ and tamely ramified at infinity (and, in fact, its monodromy at infinity is the direct sum of all non-trivial characters with trivial $d$-th power). In particular, it has property $\mathcal P_!$, since it does not have any quotient isomorphic to an Artin-Schreier object \cite[Lemma 2.6.13, 2.6.14]{katz1996rls}. By Laumon's local Fourier transform theory \cite[2.4]{laumon1987transformation}, its Fourier transform (with respect to a fixed additive character of $k$) is of the form $\GGG[1]$, where $\GGG$ is lisse on $\GG_m$ of rank $d-1$, tamely ramified at zero and whose monodromy at infinity is a direct sum of representations of the form $\LL_{\psi(tx)}\otimes{\mathcal K}_t$ for $t\in S$, where $\LL_{\psi(tx)}$ is an Artin-Schreier character and ${\mathcal K}_t$ has slopes $<1$.

 Similarly, for every integer $i$ the Fourier transform $\GGG_i$ of $F^{i\ast}P$ is lisse on $\GG_m$ of rank $d-1$, tamely ramified at $0$, and its monodromy at infinity is a direct sum of representations of the form $\LL_{\psi(t^{q^i}x)}\otimes{\mathcal K}_{t^{q^i}}$. Since Fourier transform interchanges convolution and tensor product (up to a shift), the Fourier transform of $P\ast_! (F^\ast P)\ast_!\cdots\ast_!(F^{(r-1)\ast}P)$ is $\GGG_0\otimes\GGG_1\otimes\cdots\otimes\GGG_{r-1}[1]$.
 
 This tensor product is lisse on $\GG_m$ of rank $(d-1)^r$, tamely ramified at zero, and its monodromy at infinity is a direct sum of representations of the form ${\mathcal K}\otimes\LL_{\psi((t_0+t_1^q+\cdots+t_{r-1}^{q^{r-1}})x)}$, where $\mathcal K$ has slopes $<1$. In particular, its tensor product with $\LL_{\psi(-ax)}$ is totally wild at infinity, since $a\notin S+F(S)+\cdots+F^{r-1}(S)$. Again by local Fourier transform theory, this implies that $P\ast_! (F^\ast P)\ast_!\cdots\ast_!(F^{(r-1)\ast}P)$ is concentrated in degree $-1$ and unramified at $a$, and its rank there is the dimension of $\HHH^1_c(\AAA^1_{\bar k},\GGG_0\otimes\GGG_1\otimes\cdots\otimes\GGG_{r-1}\otimes\LL_{\psi(ax)})$, which by the Ogg-Shafarevic formula is $(d-1)^r$ (its Swan conductor at infinity).
 
 We conclude that $\HH^i(P\ast_!(F^\ast P)\ast_!\cdots\ast_!(F^{(r-1)\ast}P))_a=0$ for $i>-1$. Since $\FF$ is pure of weight $0$, $P$ is mixed of weights $\leq 1$. Then $\pi_{cv\ast}P$ is mixed of weights $\leq r$ by Proposition \ref{weights}. Applying corollary \ref{estimate}, we get
 $$
 \left|\sum_{y\in k_r;\Tr(y)=a}\Tr(F_y|{\mathcal P}_{\bar y})\right|
 =
 \left|\sum_{y\in k_r;\Tr(y)=a}\Tr(F_y|{\mathcal F}_{\bar y})\right|\leq (d-1)^r \cdot q^{(r-1)/2}
 $$
 \end{proof}
 
 Applying this to $a=0$ and using the fact that the number of solutions in $k_r$ to the equation $y^q-y=t$ is $q$ if $\Tr_{k_r/k}(t)=0$ and $0$ otherwise, we get  

 \begin{cor}
  Under the previous hypotheses, the number $N$ of $k_r$-valued points on the Artin-Schreier curve $y^q-y=f(x)$ is bounded by
  $$
  |N-q^r|\leq (d-1)^r\cdot q^{(r+1)/2}.
  $$
   \end{cor}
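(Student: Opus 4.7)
The plan is to convert the count of $k_r$-rational affine points on $y^q - y = f(x)$ into the trace sum bounded by Proposition \ref{artin-schreier} at $a = 0$. First I would analyze the Artin-Schreier map $\wp: k_r \to k_r$, $\wp(y) = y^q - y$: it is $\Fq$-linear with kernel equal to the fixed field $\Fq$ of the $q$-Frobenius (of cardinality $q$) and image equal to the trace-zero hyperplane $\{z \in k_r : \Tr_{k_r/k}(z) = 0\}$ (of cardinality $q^{r-1}$). Consequently, for each $x \in k_r$ the fiber of $\wp$ over $f(x)$ has cardinality $q$ or $0$ according as $\Tr_{k_r/k}(f(x))$ vanishes or not, so
$$N = q \cdot \#\{x \in k_r : \Tr_{k_r/k}(f(x)) = 0\}.$$

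Next I would use the defining short exact sequence $0 \to \FF \to f_\ast \Ql \to \Ql \to 0$: at any $y \in k_r$ the Frobenius trace on the stalk is $\Tr(F_y | \FF_{\bar y}) = \#\{x \in k_r : f(x) = y\} - 1$. Summing over the $q^{r-1}$ trace-zero elements of $k_r$ gives
$$\sum_{y \in k_r,\ \Tr(y) = 0} \Tr(F_y | \FF_{\bar y}) \;=\; \#\{x \in k_r : \Tr_{k_r/k}(f(x)) = 0\} - q^{r-1}.$$
Denoting this sum by $S$, substitution into the previous display yields the clean identity $N = q^r + q\cdot S$.

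Finally, I would invoke Proposition \ref{artin-schreier} at $a = 0$ (taking as implicit hypothesis that $0 \notin S + F(S) + \cdots + F^{r-1}(S)$, which is included among the ``previous hypotheses'' of the corollary), obtaining $|S| \leq (d-1)^r q^{(r-1)/2}$ and therefore
$$|N - q^r| \;=\; q|S| \;\leq\; q \cdot (d-1)^r q^{(r-1)/2} \;=\; (d-1)^r q^{(r+1)/2}.$$
I do not anticipate any real obstacle here: the argument is the standard dictionary between Artin-Schreier covers and trace-vanishing of the right-hand side, transplanted to the trace-constrained setting, so all the substantive work has already been done in Proposition \ref{artin-schreier}.
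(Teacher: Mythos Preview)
Your proposal is correct and follows essentially the same route as the paper: both arguments use the kernel/image description of $y\mapsto y^q-y$ to write $N=q\cdot\#\{x\in k_r:\Tr_{k_r/k}(f(x))=0\}$, then use the identity $\Tr(F_y|(f_\ast\Ql)_{\bar y})=\#f^{-1}(y)\cap k_r=1+\Tr(F_y|\FF_{\bar y})$ to obtain $N=q^r+qS$ with $S=\sum_{\Tr(y)=0}\Tr(F_y|\FF_{\bar y})$, and finish by invoking Proposition \ref{artin-schreier} at $a=0$.
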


 \begin{proof}
  The number $N$ can be expressed as
  $$
  N=\sum_{x\in k_r}\#\{y\in k_r;y^q-y=f(x)\}=q\cdot\#\{x\in k_r;\Tr_{k_r/k}(f(x))=0\}=
  $$
  $$
  =q\cdot \sum_{y\in k_r;\Tr(y)=0}\#\{x\in k_r;f(x)=y\}=q\cdot\sum_{y\in k_r;\Tr(y)=0}\Tr(F_y|(f_\ast\Ql)_{\bar y})=
  $$
  $$
  =q\cdot\left(q^{r-1}+\sum_{y\in k_r;\Tr(y)=0}\Tr(F_y|\FF_{\bar y})\right)=q^r+q\sum_{y\in k_r;\Tr(y)=0}\Tr(F_y|\FF_{\bar y})
  $$
  and we just need to apply the previous proposition.
 \end{proof}

 We now study the same example on the multiplicative group ${\mathbb G}_{m,k}$.
 
 \begin{proposition}
 Let $f\in k_r[x]$ be a polynomial of degree $d$ prime to $p$ without multiple roots and such that $f'$ has no roots with multiplicity $\geq p$, and let $\FF$ be the $\Ql$-sheaf on ${\mathbb G}_{m,k}$ given by the kernel of the trace morphism $f_\ast\Ql\to\Ql$. Let $S\subseteq\bar k$ be the set of critical values of $f$, and suppose that $a\notin S\cdot F(S)\cdot\cdots\cdot F^{r-1}(S)$ (where $F$ is the Frobenius automorphism $x\mapsto x^q$ of $\bar k$). Then
 $$
 \left|\sum_{y\in k_r^\times;\mathrm{N}(y)=a}\Tr(F_y|{\mathcal F}_{\bar y})\right|\leq r(d-1)^r \cdot q^{(r-1)/2}.
 $$
\end{proposition}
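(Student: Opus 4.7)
The plan is to follow the same outline as the proof of Proposition \ref{artin-schreier}, adapting each step from additive to multiplicative convolution on $\Gm$. Set $P=\FF[1]\in\mathcal{Perv}(\Gm\otimes k_r,\Ql)$. The first step is to check that $P$ is perverse (clear, since $\FF$ is lisse on the complement of the critical locus and tame at $0$) and has property $\mathcal{P}_!$ for the multiplicative convolution on $\Gm$. This amounts to showing that $P$ has no geometric quotient isomorphic to a shifted Kummer object $\LL_\chi[1]$; here the hypotheses that $f$ has no multiple roots and $f'$ has no roots of multiplicity $\geq p$ are used to control the ramification of $\FF$ at $0$ and $\infty$ and guarantee this.

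Next I would use the Mellin transform on $\Gm$ (playing the role of the Fourier transform in the additive case) to analyze the $r$-fold convolution $P\ast_!(F^\ast P)\ast_!\cdots\ast_!(F^{(r-1)\ast}P)$, where the convolution is with respect to the multiplication map on $\Gm$. For each $i$, $F^{i\ast}P$ is attached to the polynomial obtained by applying $F^i$ to the coefficients of $f$, whose critical values are $F^i(S)$. Under Mellin transform, convolution is interchanged with tensor product of the Mellin transforms $\GGG_i$, which are lisse on an open set of the character variety, of rank $d-1$, with local monodromy at infinity encoded by Kummer characters attached to the $F^i(S)$. The hypothesis $a\notin S\cdot F(S)\cdot\cdots\cdot F^{r-1}(S)$ then translates, after twisting by the Kummer character $\LL_{\chi_a}$, into the fact that the resulting sheaf is totally wildly ramified; by the local theory of the Mellin transform this implies that the $r$-fold convolution is concentrated in degree $-1$ at $\bar a$ and its rank there is given by a Swan-conductor computation via Ogg--Shafarevich.

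Granting this rank bound of $r(d-1)^r$, the rest of the argument mirrors the additive case: $\FF$ is pure of weight $0$, so $P$ is mixed of weight $\leq 1$, hence $\pi_{cv\ast}P$ is mixed of weight $\leq r$ by Proposition \ref{weights}. Applying Corollary \ref{estimate} with $w=1$, $n=-1$, and $C=r(d-1)^r$ then yields the announced bound $r(d-1)^r\cdot q^{(r-1)/2}$. The main obstacle will be the precise rank count at $\bar a$: on $\Gm$ the Euler--Poincaré formula involves Swan conductors at both $0$ and $\infty$, so the count is more delicate than on $\AAA^1_k$. The extra factor of $r$ compared with the Artin--Schreier bound $(d-1)^r$ should arise precisely from these additional local contributions (for instance from the ramification picked up at $0$ when tensoring the $\GGG_i$), and justifying that factor cleanly is the crux of the argument.
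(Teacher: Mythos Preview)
Your overall outline (show $P=\FF[1]$ has property $\mathcal P_!$, bound the stalk of the $r$-fold multiplicative convolution at $a$, then invoke Corollary \ref{estimate}) is right, but the heart of your argument---the Mellin transform step---does not work as you describe it. The analogy with the additive case breaks down precisely where you lean on it hardest: there is no Kummer sheaf ``$\LL_{\chi_a}$'' attached to a point $a\in\Gm$, and Kummer sheaves are \emph{tame} at $0$ and $\infty$, so twisting by one never makes anything ``totally wildly ramified''. The mechanism in the Artin--Schreier proof is that Artin--Schreier sheaves $\LL_{\psi(ax)}$ are wild at infinity, and that wildness is what forces concentration in a single degree and gives the Swan-conductor rank count. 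On $\Gm$ this mechanism is simply absent, and the Mellin transform (in the Gabber--Loeser or Katz sense) does not land in sheaves on a curve where you can run a local Fourier-type argument.

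The paper avoids Mellin transforms entirely. It argues directly on the convolution: first, $\FF$ has no Kummer subquotient (trivial monodromy at $0$ since $0\notin S$, and the monodromy at $\infty$ is a sum of nontrivial characters of order dividing $d$), so $P$ has $\mathcal P_!$. Then \cite[Lemma 19.5]{katz2012convolution} gives that the convolution $P\ast_!F^\ast P\ast_!\cdots\ast_!F^{(r-1)\ast}P$ is smooth off $T=S\cdot F(S)\cdots F^{r-1}(S)$, hence concentrated in degree $-1$ at $a\notin T$. The rank $r(d-1)^r$ is obtained by \emph{induction on $r$}: writing $Q$ for the $(r-1)$-fold convolution, one computes $\dim\HHH^1_c(\Gm,\HH^{-1}(Q)\otimes\tau_a^\ast F^{(r-1)\ast}\FF)$ by Ogg--Shafarevich, using that everything is \emph{tamely} ramified (this is where the hypothesis on $f'$ enters, via \cite{rojas2013local}). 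Because $a\notin T$, the ramification loci of the two tensor factors are disjoint, so the Euler characteristic splits as $(r-1)(d-1)^{r-1}\cdot(d-1)+(d-1)\cdot(d-1)^{r-1}=r(d-1)^r$. The extra factor of $r$ thus comes from this additive splitting in the induction, not from wild local terms at $0$ as you guessed.
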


\begin{proof}
 Since $f$ does not have multiple roots, $0$ is not a critical value of $f$. This implies that $\FF$ is unramified at $0$. Then $\FF$ has trivial monodromy at $0$ and its monodromy at infinity is the direct sum of all non-trivial characters of the inertia group of ${\mathbb G}_{m,k}$ at $\infty$ with trivial $d$-th power. In particular, $\FF$ does not have any subquotient isomorphic to a Kummer sheaf $\LL_\chi$ (since such a sheaf would have non-trivial monodromy at zero if $\chi$ is not trivial, and trivial monodromy at infinity otherwise). Then the perverse sheaf $P:=\FF[1]$ has property $\mathcal P_!$, and the same happens to $F^{i\ast}P$ for every integer $i$. Therefore
 $$
 P\ast_! (F^\ast P)\ast_!\cdots\ast_!(F^{(r-1)\ast}P)
 $$
 is a perverse sheaf on ${\mathbb G}_{m,k}$ by \cite[2.6]{katz1996rls}. By \cite[Lemma 19.5]{katz2012convolution}, this sheaf is smooth on ${\mathbb G}_{m,k}\backslash T$, where $T=S\cdot F(S)\cdots F^{r-1}(S)$. So, if $a\notin T$, we have ${\mathcal H}^i(P\ast_! (F^\ast P)\ast_!\cdots\ast_!(F^{(r-1)\ast}P))_a={\mathcal H}^{r+i}(\FF\ast_! (F^\ast \FF)\ast_!\cdots\ast_!(F^{(r-1)\ast}\FF))_a=0$ for $i>-1$. Since $\FF$ is pure of weight $0$, $P$ is mixed of weights $\leq 1$, and by corollary \ref{estimate} we get the estimate
 $$
 \left|\sum_{y\in k_r^\times;\mathrm{N}(y)=a}\Tr(F_y|{\mathcal F}_{\bar y})\right|\leq C \cdot q^{(r-1)/2}.
 $$
 where $C$ is the rank of ${\mathcal H}^{-1}(P\ast_! (F^\ast P)\ast_!\cdots\ast_!(F^{(r-1)\ast}P))$ at $a$. 
 
 We claim that this rank is $r(d-1)^r$. We will prove this by induction on $r$: for $r=1$ it is obvious. Suppose that we have shown that $Q:=P\ast_! (F^\ast P)\ast_!\cdots\ast_!(F^{(r-2)\ast}P)$ is a perverse sheaf whose ${\mathcal H}^{-1}$ has generic rank $(r-1)(d-1)^{r-1}$. The hypotheses on $f$ imply that $\FF$ (and so $F^{i\ast}\FF$ for every $i$) is everywhere tamely ramified. Therefore so is $\HH^{-1}(Q)$ by \cite[Corollary 24]{rojas2013local}. The rank of $\HH^{-1}(Q\ast_!(F^{(r-1)\ast}P))$ at $a$ is then
 $$
 \dim\HHH^1_c({\mathbb G}_{m,\bar k},\HH^{-1}(Q)\otimes \tau_a^\ast F^{(r-1)\ast}\FF)
 $$
 where $\tau_a:{\mathbb G}_{m,\bar k}\to{\mathbb G}_{m,\bar k}$ is the map $t\mapsto a/t$. By the Ogg-Shafarevic formula, since $\HH^{-1}(Q)\otimes \tau_a^\ast F^{(r-1)\ast}\FF$ is everywhere tamely ramified and $\HHH^1_c$ is its only non-zero cohomology group, this dimension is given by the sum of the rank drops at the ramification points. 
 
 Now, since $a\notin S\cdot F(S)\cdot\cdots\cdot F^{r-1}(S)$, the sets of ramification points of $\HH^{-1}(Q)$ (which is contained in $S\cdot F(S)\cdots F^{r-2}(S)$) and $\tau_a^\ast F^{(r-1)\ast}\FF$ are disjoint, so the set of ramification points of $\HH^{-1}(Q)\otimes \tau_a^\ast F^{(r-1)\ast}\FF$ is the union of these two, and the drop at each ramification point coming from one of the factors is the drop of the point in said factor multiplied by the generic rank of the other factor. That is,
 $$
 \dim\HHH^1_c({\mathbb G}_{m,\bar k},\HH^{-1}(Q)\otimes \tau_a^\ast F^{(r-1)\ast}\FF)=
 $$
 $$
 =\mathrm{rank}(\HH^{-1}(Q))\dim\HHH^1_c({\mathbb G}_{m,\bar k},\tau_a^\ast F^{(r-1)\ast}\FF)+
 $$
 $$
 +\mathrm{rank}(\tau_a^\ast F^{(r-1)\ast}\FF)\dim\HHH^1_c({\mathbb G}_{m,\bar k},\HH^{-1}(Q))
 $$
 $$
 (r-1)(d-1)^{r-1}(d-1)+(d-1)(d-1)^{r-1}=r(d-1)^r.
 $$
 since
 $$
 \dim\HHH^1_c({\mathbb G}_{m,\bar k},\HH^{-1}(Q))=\chi_c({\mathbb G}_{m,\bar k},Q)=\chi_c({\mathbb G}_{m,\bar k},P\ast_! (F^\ast P)\ast_!\cdots\ast_!(F^{(r-2)\ast}P))=
 $$
 $$
 =\chi_c({\mathbb G}_{m,\bar k},P)\chi_c({\mathbb G}_{m,\bar k},F^\ast P)\cdots\chi_c({\mathbb G}_{m,\bar k},F^{(r-2)\ast}P)=\chi_c({\mathbb G}_{m,\bar k},P)^{r-1}
 $$
and
$$
\chi_c({\mathbb G}_{m,\bar k},P)=-\chi_c({\mathbb G}_{m,\bar k},\FF)=(d-1)-\chi_c(\AAA^1_{\bar k},\FF)
$$
since $f$ has no multiple roots (so $\FF$ has rank $d-1$ at $0$), and
$$
\chi_c(\AAA^1_{\bar k},\FF)=\chi_c(\AAA^1_{\bar k},f_\ast\Ql)-\chi_c(\AAA^1_{\bar k},\Ql)=1-1=0.
$$
\end{proof}

Applying this to $a=1$ and using the fact that the number of solutions in $k_r$ to the equation $y^{q-1}=t$ is $q-1$ if $\mathrm{N}_{k_r/k}(t)=1$ and $0$ otherwise, we get  

 \begin{cor}
  Under the previous hypotheses, the number $N$ of $k_r$-valued points on the superelliptic curve $y^{q-1}=f(x)$ is bounded by
  $$
  |N-q^r-\delta+1|\leq r(d-1)^r(q-1)\cdot q^{(r-1)/2}
  $$
  where $\delta$ is the number of roots of $f$ in $k_r$.
   \end{cor}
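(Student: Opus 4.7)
The plan is to mimic the Artin--Schreier corollary: express $N$ as a sum over $x\in k_r$, reduce the inner count to a sum of Frobenius traces of $\FF$ at norm-one points, and then invoke the preceding proposition. The key arithmetic fact I will use is that the map $y\mapsto y^{q-1}$ on $k_r^\times$ is a homomorphism whose kernel is exactly $k^\times$ (so fibres have size $q-1$) and whose image is the kernel of the norm map $\mathrm N=\mathrm N_{k_r/k}$. Consequently, for $t\in k_r^\times$ the equation $y^{q-1}=t$ has $q-1$ solutions in $k_r^\times$ if $\mathrm N(t)=1$ and none otherwise; for $t=0$ it has the single solution $y=0$.

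First I would split the count
\[
N=\sum_{x\in k_r}\#\{y\in k_r\;|\;y^{q-1}=f(x)\}
\]
into the contribution $\delta$ from the zeros of $f$ in $k_r$ and the contribution from the remaining $x$. Applying the facts above, the latter equals
\[
(q-1)\cdot\#\{x\in k_r\;|\;f(x)\neq 0,\;\mathrm N(f(x))=1\}=(q-1)\sum_{y\in k_r^\times,\;\mathrm N(y)=1}\#\{x\in k_r\;|\;f(x)=y\}.
\]
Next I would rewrite the inner cardinality as a trace: $\#\{x\in k_r\;|\;f(x)=y\}=\Tr(F_y|(f_\ast\Ql)_{\bar y})$, and decompose $f_\ast\Ql\cong\Ql\oplus\FF$ to separate the constant trace from the interesting piece, obtaining $1+\Tr(F_y|\FF_{\bar y})$.

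Substituting back gives
\[
N=\delta+(q-1)\cdot\#\{y\in k_r^\times\;|\;\mathrm N(y)=1\}+(q-1)\sum_{y\in k_r^\times,\;\mathrm N(y)=1}\Tr(F_y|\FF_{\bar y}).
\]
Since $\#\{y\in k_r^\times\;|\;\mathrm N(y)=1\}=(q^r-1)/(q-1)$, the first two terms collapse to $\delta+q^r-1$, so that
\[
N-q^r-\delta+1=(q-1)\sum_{y\in k_r^\times,\;\mathrm N(y)=1}\Tr(F_y|\FF_{\bar y}).
\]
At this point the previous proposition, applied with $a=1$, bounds the absolute value of the sum on the right by $r(d-1)^r q^{(r-1)/2}$, and multiplication by $q-1$ yields the claimed inequality.

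There is essentially no obstacle beyond the combinatorial bookkeeping; the real content was already proved in the previous proposition, and one only has to verify that $1\notin S\cdot F(S)\cdots F^{r-1}(S)$ is the correct hypothesis to quote (which is the standing assumption) and that the decomposition $f_\ast\Ql=\Ql\oplus\FF$ cleanly separates the main term $q^r$ from the error. The minor care point is tracking the $\delta$ correction from the zeros of $f$, but once those contribute exactly $\delta$ to $N$ and $0$ to the norm sum, everything lines up.
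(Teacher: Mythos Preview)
Your argument is correct and follows the paper's proof essentially line by line: the same splitting off of the $\delta$ roots, the same rewriting via $f_\ast\Ql=\Ql\oplus\FF$, the same count $\#\{y\in k_r^\times:\mathrm N(y)=1\}=(q^r-1)/(q-1)$, and the same final appeal to the preceding proposition with $a=1$.
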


 \begin{proof}
  The number $N$ can be expressed as
  $$
  N=\sum_{x\in k_r}\#\{y\in k_r;y^{q-1}=f(x)\}=\delta+\sum_{x\in k_r}\#\{y\in k_r^\times;y^{q-1}=f(x)\}=
  $$
  $$
  =\delta+(q-1)\cdot\#\{x\in k_r;\mathrm{N}_{k_r/k}(f(x))=1\}
  =
  $$
  $$
  =\delta+(q-1)\cdot \sum_{y\in k_r^\times;\mathrm{N}(y)=1}\#\{x\in k_r;f(x)=y\}=
  $$
  $$
  =\delta+(q-1)\cdot\sum_{y\in k_r^\times;\mathrm{N}(y)=1}\Tr(F_y|(f_\ast\Ql)_{\bar y})=
  $$
  $$
  =\delta+(q-1)\cdot\left(\frac{q^r-1}{q-1}+\sum_{y\in k_r^\times;\mathrm{N}(y)=1}\Tr(F_y|\FF_{\bar y})\right)=
  $$
  $$
  =\delta+q^r-1+(q-1)\sum_{y\in k_r^\times;\mathrm{N}(y)=1}\Tr(F_y|\FF_{\bar y})
  $$
  and we just need to apply the previous proposition.
 \end{proof}
 
 To conclude, let us compare the estimates obtained using the convolution direct image with those obtained in \cite{rl-rationality} using the convolution Adams operation. For objects which are defined on $X$, the Adams operation generally gives better estimates (this is to be expected, since the Adams operation is a linear combination of subobjects of the convolution power, so it has lower rank). For instance, if $g\in k[x]$ and $a\in k$ satisfy the hypotheses of Proposition \ref{artin-schreier}, by \cite[Example 6.5]{rl-rationality} we have
 $$
 \left|\#\{x\in k_r;\Tr(f(x))=a\}-q^{r-1}\right|\leq C_{d,r} \cdot q^{(r-1)/2}.
 $$
 where $C_{d,r}=\sum_{i=0}^{r-1}{\binom{d+r-i-2}{r}}\binom{r-1}{i}$, which is better than the estimate in Proposition \ref{artin-schreier}, as this table shows (for $d=5$):
\begin{center}
\begin{tabular}{r|r|r}
$r$ & $C_{d,r}$ & $(d-1)^r$ \\ \hline
2 & 16 & 16\\
3 & 44 & 64\\
4 & 96 & 256 \\
5 & 180 & 1024 \\
10 & 1360 & 1048576 \\
20 & 10720 & 1099511627776
\end{tabular}
\end{center}

On the other hand, for objects which are not defined on $X$ (only on $Y$), the method in \cite[Section 8]{rl-rationality} implies taking the direct sum of all its Galois conjugates (which descends to $X$) and then taking convolution Adams power, which greatly increases the rank. In this case, the estimates here are much better: applying \cite[Corollary 8.2]{rl-rationality}, for $f\in k_r[x]\backslash k[x]$ we would get
$$
 \left|\#\{x\in k_r;\Tr(f(x))=a\}-q^{r-1}\right|\leq \frac{1}{r}C_{rd-r+1,r} \cdot q^{(r-1)/2}.
$$
which is worse than the estimate in \ref{artin-schreier}, as this table shows (again for $d=5$):
\begin{center}
\begin{tabular}{r|r|r}
$r$ & $\lceil\frac{1}{r}C_{rd-r+1,r}\rceil$ & $(d-1)^r$ \\ \hline
2 & 32 & 16\\
3 & 386 & 64\\
4 & 5504 & 256 \\
5 & 86401 & 1024 \\
10 & 153547568007 & 1048576 \\
20 & 1356608411506872363943501 & 1099511627776
\end{tabular}
\end{center}

So the estimates provided by the convolution direct image greatly improve those of \cite{rl-rationality} for objects not defined on $X$. On the other hand, note that the convolution direct image operation does \emph{not} commute with finite extensions of the base field, so this operation can not be used to construct a ``Trace $L$-function'' as if was done in \cite{rl-rationality} for objects defined on $X$ (and, in fact, such a function is not rational in general, as it was shown in \cite[Section 8]{rl-rationality}).

\bibliographystyle{amsalpha}
\bibliography{bibliography}

\end{document}